\newtheorem{assum}{Assumption}
\newtheorem{prop}{Proposition}
\newtheorem{thm}{Theorem}
\newtheorem{rem}{Remark}
\newtheorem{lem}{Lemma}
\newtheorem{ex}{Example}
\begin{document}

\ifpdf
\DeclareGraphicsExtensions{.pdf, .jpg, .tif}
\else
\DeclareGraphicsExtensions{.eps, .jpg}
\fi

\title{Parameter and state estimation of nonlinear systems using a multi-observer under the supervisory framework}
\author{Michelle S. Chong, Dragan Ne\v{s}i\'{c}, Romain Postoyan and Levin Kuhlmann 
\thanks{M. Chong, D. Ne\v{s}i\'{c} and L. Kuhlmann are with the Department of Electrical and Electronic Engineering, the University of Melbourne, Australia.
       \tt\small \{chongms,dnesic,levink\}@unimelb.edu.au}%
\thanks{R. Postoyan is with the Universit\'{e} de Lorraine, CRAN, UMR 7039 and the CNRS, CRAN, UMR 7039, France. He is financially supported by the ANR under the grant SEPICOT (ANR 12 JS03 004 01). 							\tt\small{romain.postoyan@univ-lorraine.fr}}%
}%
\maketitle \vspace{-4em}
\begin{abstract}
	We present a hybrid scheme for the parameter and state estimation of nonlinear continuous-time systems, which is inspired by the supervisory setup used for control. State observers are synthesized for some nominal parameter values and a criterion is designed to select one of these observers at any given time instant, which provides state and parameter estimates. Assuming that a persistency of excitation condition holds, the convergence of the parameter and state estimation errors to zero is ensured up to a margin, which can be made as small as desired by increasing the number of observers. To reduce the potential computational complexity of the scheme, we explain how the sampling of the parameter set can be dynamically updated using a zoom-in procedure. This strategy typically requires a fewer number of observers for a given estimation error margin compared to the static sampling policy. The results are shown to be applicable to linear systems and to a class of nonlinear systems. We illustrate the applicability of the approach by estimating the synaptic gains and the mean membrane potentials of a neural mass model.
\end{abstract}

\section{Introduction} \label{sec:intro}
The estimation of states and parameters is a long-standing problem in control theory, which is particularly involved when dealing with nonlinear systems.  While approaches are available to estimate the states (\emph{e.g.} \cite{andrieu2006existence, arcak2001nonlinear, besancon2007nonlinear, karagiannis2008invariant,khalil1999high}) or the parameters (\emph{e.g.} \cite{adetola2008finite}, \cite[Chapter 4]{ioannou1996robust}, \cite{ljung1999system}), the simultaneous estimation of parameter and state remains a challenging problem \cite{besancon2007nonlinear}. A classical technique consists of augmenting the state vector with the parameter vector and thus reducing the problem to state estimation only. However, this complicates the problem even for linear systems since rewriting the system in these new coordinates can turn the system highly nonlinear. We note that while nonlinear Kalman filters are often used in practice, their convergence is only provable under limited conditions \cite[Chapter 8]{anderson1979optimal}, \cite{julier2004unscented}, in addition, these filters may be difficult to tune.

In this paper, we investigate the problem of state and parameter estimation of deterministic nonlinear continuous-time systems by adopting an architecture known as the supervisory framework, see \cite[Chapter 6]{liberzon2003switching}. We assume that the system parameters are {constant} and that they belong to a known, compact set. We sample the parameter set to form a finite set of nominal values. A state observer is designed for each of these nominal values, which is assumed to satisfy a given robustness property with respect to parameter mismatches. They form the multi-observer unit \cite[Chapter 6]{liberzon2003switching}. The {supervisory} unit then provides state and parameter estimates by selecting one observer from the bank at any given time instant. We call this setup a \emph{supervisory observer}. The parameter and state estimates are guaranteed to converge to the true values up to any given margin, provided that the number of observers is sufficiently large. To overcome the possible computational burden of implementing a large number of observers, we propose a dynamic sampling policy of the parameter set, which is inspired by the quantization strategy used for control in \cite{liberzon2003hybrid}. The idea is to update the sampling of the parameter set iteratively via a zoom-in procedure. With this dynamic scheme, we show that the parameter and state estimation errors also converge to the origin up to an adjustable margin with the added benefit of typically requiring less  observers, as illustrated on an example.

The supervisory framework has traditionally been used for control, see \cite{battistelli2012supervisory,  hespanha2003hysteresis, hespanha2001multiple, morse1997supervisory, vu2011supervisory}. In these works, the system dynamics depend on some unknown parameters and the objective is to steer the system state to the origin, however no guarantee is provided on the parameter estimates. A similar approach to the one we propose is pursued for the estimation of linear systems in \cite{aguiar2007convergence, aguiar2008identification, han2012new}, \cite[Section 8.5]{anderson1979optimal}, \cite{li1996multiple}, where multiple observers are employed with different selection criteria. It has to be noted that the idea of dynamically sampling the parameter set has been used in \cite{han2012new} for linear systems under the assumption that the full state is measured, which is not the case in this study. In addition, we address nonlinear systems and we envision a different methodology compared to \cite{han2012new}.

We believe that this paper illustrates the potential of casting the problem of estimation of continuous-time systems in the hybrid systems setting. While hybrid tools have proved their efficiency for numerous control problems (see for example \cite{goebel2009hybrid, nesic2008stability, prieur2001uniting} to name a few), few works have investigated the estimation problem from this angle, see \cite{adetola2008finite}, \cite{hartman2012robust} (for parameter estimation) and \cite{karafyllis2011hybrid} (for state estimation). To the best of our knowledge, this paper is the first to address the joint estimation of states and parameters from this perspective. We believe that the advantages of supervisory control mentioned in \cite{hespanha2003overcoming} translate well to estimation. Firstly, it is not necessary to construct an adaptive observer, which is very challenging for nonlinear systems, see \emph{e.g.} \cite{besancon2007nonlinear, farza2009adaptive, zhang2002adaptive,tyukin2007adaptation}. Secondly, the supervisory framework has the advantage of modularity. Each of the components, namely the selection criterion, the monitoring signals and the multi-observer, can be designed independently to satisfy the respective properties required to meet our objective. This allows for the usage of readily available {state} observers for the additional purpose of parameter estimation. We thus show that Luenberger observers and a robust form of circle criterion observers \cite{arcak2001nonlinear, chong2012robust} can be used within this framework, noting that the approach is applicable to other types of state observers.

This work is motivated by the great need for estimation in neuroscience, especially for developing new methods for the classification or diagnosis of neurological diseases, such as epilepsy \cite{schiff2011neural}. To illustrate the applicability of the framework, we implement the proposed algorithm to estimate the mean membrane potentials (states) and the synaptic gains (parameters) of a neural mass model \cite{jansen1995eeg} which is able to realistically reproduce patterns seen in electroencephalographic (EEG) recordings.

The paper is organised as follows. In Section \ref{sec:prelim}, we introduce the mathematical notation used in the paper. The problem is stated in Section \ref{sec:prob} and we describe the supervisory setup in Section \ref{sec:framework_static} for the static sampling policy. The dynamic sampling policy is presented in Section \ref{sec:framework_dynamic}. The results obtained for both policies are shown to be applicable to linear systems and a class of nonlinear systems in Section \ref{sec:special}. The illustrative example from neuroscience is presented in Section \ref{sec:example}. Lastly, Section \ref{sec:conclusion} concludes the paper. All proofs are provided in the Appendix.

\section{Preliminaries} \label{sec:prelim}
Let $\mathbb{R}=(-\infty,\infty)$, $\mathbb{R}_{\geq 0}=[0,\infty)$,
$\mathbb{R}_{>0}=(0,\infty)$, $\mathbb{N}=\{0,1,2,\dots\}$ and $\mathbb{N}_{\geq 1}=\{1,2,\dots\}$.
 	The notation $(u,v)$ stands for $[u^{T} \; v^{T}]^{T}$, where $u\in\mathbb{R}^{m}$ and $v\in \mathbb{R}^{n}$.
	For a vector $x\in\mathbb{R}^{n}$,  $|x|$ denotes the Euclidean norm of $x$ and $|x|_{\infty}$ denotes the infinity norm of $x$, i.e.  $|x|_{\infty}:=\max\{|x_1|,\dots,|x_n|\}$ where $x=(x_1,\dots,x_n)$.
	Let $\textrm{diag}(a_1,\dots,a_n)$ stand for the diagonal matrix with real elements $a_1,\dots,a_n$.
	The maximum (minimum) eigenvalue of a real, symmetric matrix $P$ is denoted $\lambda_{\max}(P)$ ($\lambda_{\min}(P)$).
	The symmetric block component of a symmetric matrix is denoted by $\star$.
	The notation $\mathbb{I}$ stands for the identity matrix.
	The hypercube centered at $\xi \in \mathbb{R}^{n}$ of edge length $2r>0$ is denoted by $\mathcal{H}(\xi,{r}) := \{x\in\mathbb{R}^{n} : \left|x - \xi \right|_{\infty} \leq r \}$.
	For any $\Delta>0$, let the set of piecewise continuous functions from $\mathbb{R}_{\geq 0}$ to $\mathcal{H}(0,\Delta)$ be  $\mathcal{M}_{\Delta}$.
	The left-limit operator is denoted by $(\cdot)^{-}$.
 	A continuous function $\alpha:\mathbb{R}_{\geq 0} \rightarrow \mathbb{R}_{\geq 0}$ is a class $\mathcal{K}$ function, if it is strictly increasing and $\alpha(0)=0$; additionally, if $\alpha(r)\to\infty$ as $r \to \infty$, $\alpha$ is a class $\mathcal{K}_{\infty}$ function.
	A continuous function $\beta:\mathbb{R}_{\geq 0}\times \mathbb{R}_{\geq 0} \rightarrow \mathbb{R}_{\geq 0}$ is a class $\mathcal{KL}$ function, if $\beta(\cdot,s)$ is a class $\mathcal{K}$ function for each $s\geq 0$ and $\beta(r,.)$ is non-increasing and $\beta(r,s)\rightarrow 0$ as $s\rightarrow \infty$ for each $r\geq 0$.

\section{Problem formulation} \label{sec:prob}
Consider the system
\begin{eqnarray}
	\dot{x} & = & f(x,p^{\star},u) \nonumber \\
	y & = & h(x,p^{\star}), \label{eq:general_plant}
\end{eqnarray}
where the state is $x\in\mathbb{R}^{n_x}$, the measured output is $y\in\mathbb{R}^{n_y}$, the input is $u \in \mathbb{R}^{n_u}$ which is known and the \emph{unknown} parameter vector $p^{\star}\in \Theta \subset \mathbb{R}^{n_p}$ is constant, where $\Theta$ is a {known} compact set. For any initial condition and any piecewise-continuous input $u$, system \eqref{eq:general_plant} admits a unique solution that is defined for all positive time. We make the following assumption on system (\ref{eq:general_plant}).

\begin{assum} \label{ass:stable_sys}
	The solutions to system \eqref{eq:general_plant} are uniformly bounded, i.e. for all $\Delta_{x}$, $\Delta_u \geq 0$, there exists a constant $K_{x}=K_{x}(\Delta_x,\Delta_u) > 0$ such that for all $x(0)\in \mathcal{H}(0,\Delta_{x})$ and $u\in \mathcal{M}_{\Delta_u}$
	\begin{equation}
	 	|x(t)|_{\infty} \leq K_x, \qquad \forall t\geq 0. \label{eq:stable_sys_bound}
	\end{equation}\hfill $\Box$
\end{assum}
It has to be noted that the bound $K_x$ in \eqref{eq:stable_sys_bound} does not need to be known to implement the estimation algorithms presented in Sections \ref{sec:framework_static} and \ref{sec:framework_dynamic}, only its existence has to be ensured. Furthermore, Assumption \ref{ass:stable_sys} can be relaxed in some cases as explained later in Remark \ref{rem:obs}.
\begin{rem}
	Contrary to the problem of supervisory control in \cite{vu2011supervisory}, we do not require system \eqref{eq:general_plant} to be stabilisable. Since our purpose is to estimate, we only require the solutions to system \eqref{eq:general_plant} to be uniformly bounded. This is a reasonable assumption in the estimation context since most physical systems have solutions that are uniformly bounded, as illustrated by the neural mass models investigated in Section \ref{sec:example}.
\end{rem}

The main objective of this paper is to estimate the parameter vector $p^{\star}$ and the state $x$ of system \eqref{eq:general_plant} when only the input $u$ and the output $y$ of system \eqref{eq:general_plant} are measured, using a supervisory observer.

\section{Supervisory observer with a static sampling policy} \label{sec:framework_static}
\subsection{Description}
Inspired by the supervisory framework used for control \cite[Chapter 6]{liberzon2003switching}, the proposed methodology consists of two basic units (see Figure \ref{fig:supervisory_setup}): a bank of state observers (multi-observer) which generates state estimates and the supervisor (monitoring signals and a selection criterion) which chooses one observer at any given time. The estimated parameters and states are derived from the choice the supervisor makes.
\begin{figure}[h!]
	\begin{center}
 		\includegraphics[height=7cm,width=8.9cm]{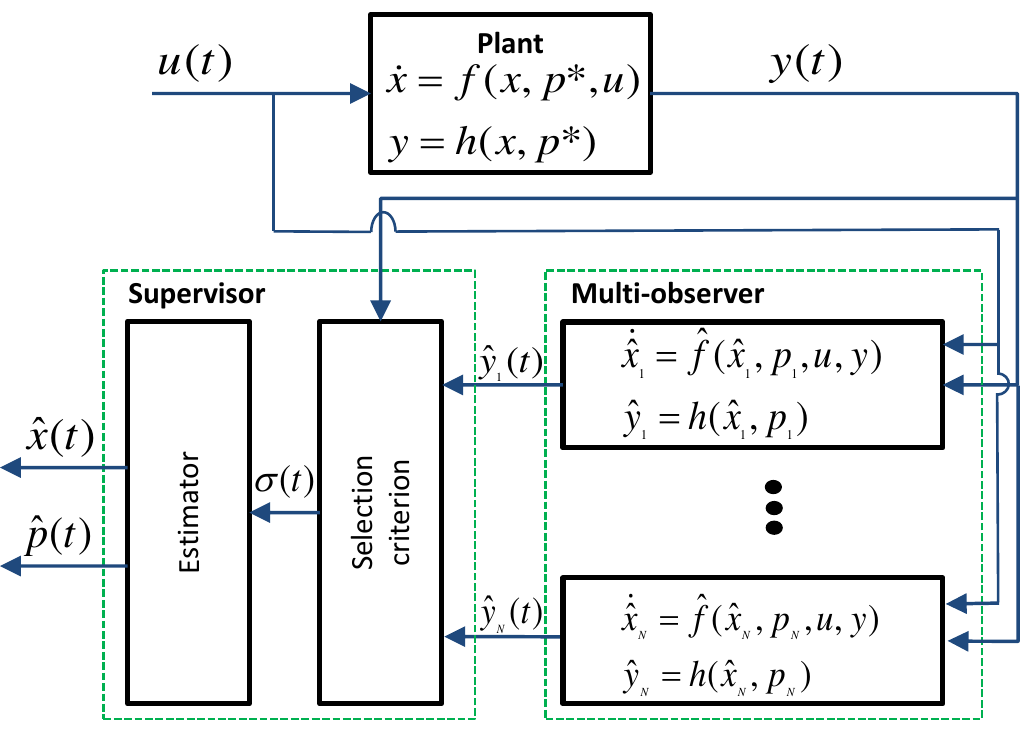}
	\end{center} \vspace{-2em}
	\caption{Supervisory observer. \label{fig:supervisory_setup}}		
\end{figure}
\vspace{1em}
\subsubsection{Static sampling of the parameter set $\Theta$} \label{sec:static_sampling}
We select $N\in\mathbb{N}_{\geq 1}$ parameter values $p_i$, $i\in\{1,\dots,N\}$, in the set $\Theta$ to form the sampled parameter set $\widehat{\Theta}:=\{p_1,\dots,p_N\}$. The selection of the samples is done in such a way that the distance from $p^{\star}$ to $\widehat{\Theta}$ defined as
\begin{equation}
	d(p^{\star},\widehat{\Theta}) := \underset{p\in\widehat{\Theta}}{\min} \left| p^{\star} - p \right|_{\infty}, \label{eq:pt_set_dist}
\end{equation}
tends to zero as $N$ tends to infinity. This can be guaranteed by embedding the parameter set $\Theta$ in a hyperrectangle (which is always possible because $\Theta$ is compact) and by employing uniform sampling, for instance. It may also be achieved using a logarithmic sampling if prior information is known, such as the probability distribution of the system parameter in $\Theta$. Figure \ref{fig:partition} illustrates these forms of sampling.

\begin{figure}[h!]
	\begin{center}
 		\subfigure[Uniform]{
			\includegraphics[scale=0.4]{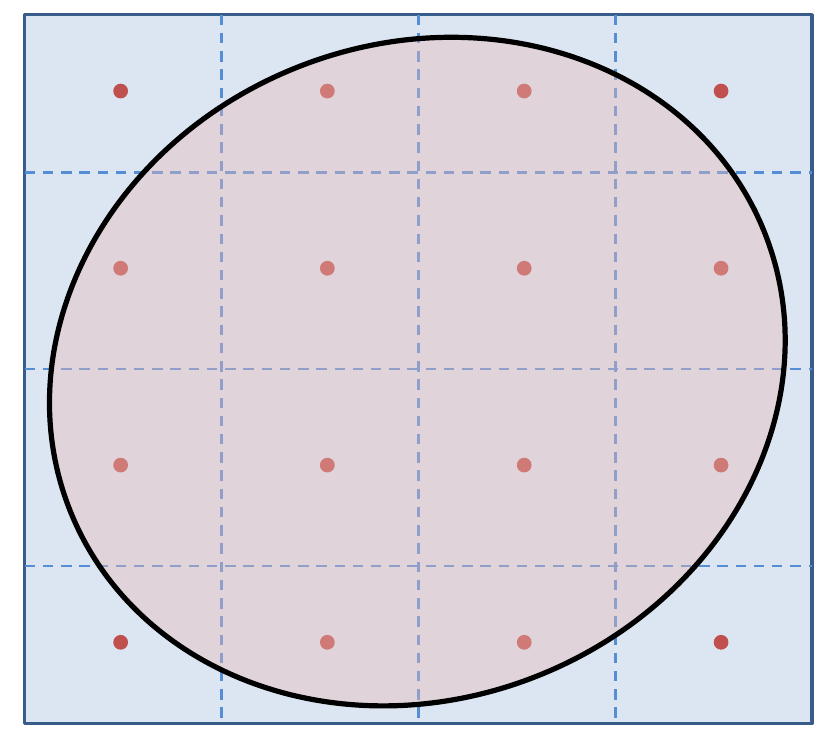}
		}
		\subfigure[Logarithmic]{
			\includegraphics[scale=0.4]{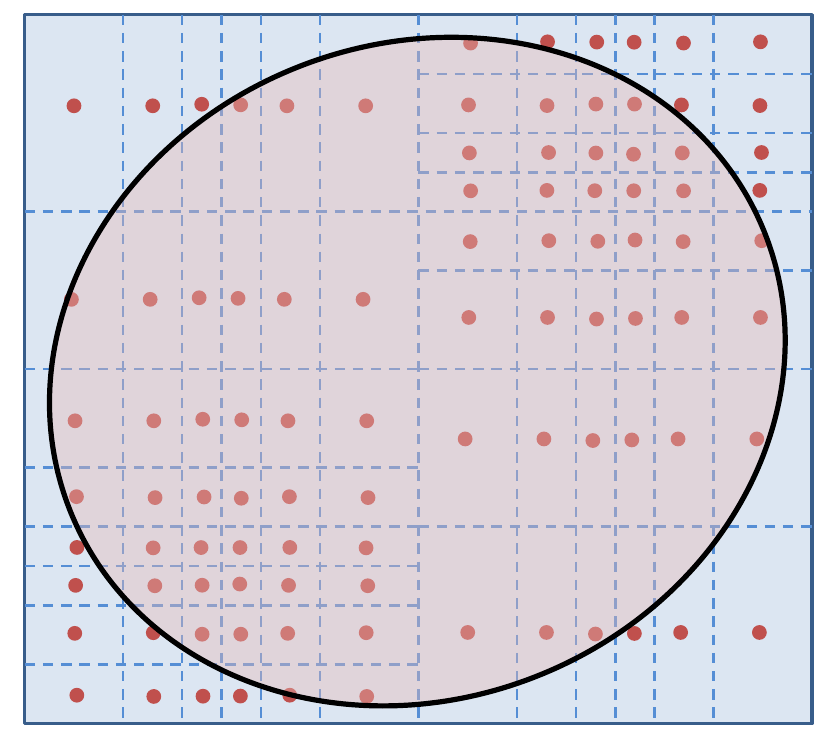}
		}
	\end{center} \vspace{-2em}
	\caption{Examples for the sampling of the parameter set $\Theta \subset \mathbb{R}^{2}$. Dots represent the parameters $p_{i}\in \widehat{\Theta}$, $i\in\{1,\ldots,N\}$. The shaded region in red indicates the parameter set $\Theta$ that has been embedded in a hyperrectangle (shaded region in blue).\label{fig:partition}}
\end{figure}

\vspace{1em}

\subsubsection{Multi-observer} \label{sec:multiobs}
A state observer is designed for each $p_{i}$, $i\in\{1,\dots,N\}$,
\begin{eqnarray}
	\dot{\hat{x}}_{i} & = & \hat{f}(\hat{x}_{i},p_{i},u,y) \nonumber \\
	\hat{y}_{i} & = & h(\hat{x}_{i},p_{i}),  \label{eq:observer_i}
\end{eqnarray}
where $\hat{x}_{i}\in\mathbb{R}^{n_x}$ is the state estimate and $\hat{y}_{i}\in\mathbb{R}^{n_y}$ is the output estimate. The solutions to \eqref{eq:observer_i} are assumed to be unique and defined for all positive time for all initial conditions, any piecewise-continuous input $u$, any system output $y$ and any parameter $p_i \in {\Theta}$.  Denoting the state estimation error as $\tilde{x}_{i}:=\hat{x}_{i}-x$, the output error as $\tilde{y}_{i}:=\hat{y}_{i}-y$ and the parameter error as $\tilde{p}_{i}:=p_{i}-p^{\star}$, we obtain the following state estimation error systems, for $i\in\{1,\dots,N\}$,
\begin{equation}
	\begin{array}{lll}
	\dot{\tilde{x}}_{i} & = & \hat{f}(\tilde{x}_{i}+x,\tilde{p}_{i}+p^{\star},u,y) - f(x,p^{\star},u)
	 =:  F_{i}(\tilde{x}_{i}, x, \tilde{p}_{i}, p^{\star}, u)  \\
	\tilde{y}_{i} & = & h(\tilde{x}_{i}+x,\tilde{p}_{i}+p^{\star}) - h(x,p^{\star}) =: H(\tilde{x}_{i}, x, \tilde{p}_{i}, p^{\star}).
	\end{array} \label{eq:error_sys_general}
\end{equation}
We assume that the observers \eqref{eq:observer_i} are designed such that the following property holds.

\begin{assum}\label{ass:obs_error_i}
	There exist scalars $a_{1}$, $a_{2}$, $\lambda_{0}>0$ and a continuous non-negative function $\tilde{\gamma}:\mathbb{R}^{n_p}\times \mathbb{R}^{n_x} \times \mathbb{R}^{n_u} \rightarrow \mathbb{R}_{\geq 0}$ with $\tilde{\gamma}(0,x,u)=0$ for all $x \in \mathbb{R}^{n_x}$, $u \in \mathbb{R}^{n_u}$ such that for any $p_i \in \Theta$, $i\in\{1,\dots,N\}$, there exists a continuously differentiable function $V_{i}: \mathbb{R}^{n_x} \rightarrow \mathbb{R}_{\geq 0}$, which satisfies the following for all $\tilde{x}_{i} \in\mathbb{R}^{n_x}$, $x\in\mathbb{R}^{n_x}$, $u\in\mathbb{R}^{n_u}$,
	\begin{equation}
		a_{1}|\tilde{x}_{i}|_{\infty}^{2} \leq V_{i}(\tilde{x}_{i}) \leq a_{2}|\tilde{x}_{i}|_{\infty}^{2}, \label{eq:lyap_iss_1}
	\end{equation}
	\begin{equation}
			\frac{\partial V_i}{\partial \tilde{x}_{i}} F_{i}(\tilde{x}_{i},\tilde{p}_{i},p^{\star},u,x) \leq -\lambda_{0} V_{i}(\tilde{x}_{i}) + \tilde{\gamma}(\tilde{p}_{i},x,u). \label{eq:lyap_iss_2}
	\end{equation}	 \hfill $\Box$
\end{assum}
Assumption \ref{ass:obs_error_i} means that we know how to design a state observer for system \eqref{eq:general_plant} which is robust to parameter errors. When $\tilde{p}_{i}=0$, i.e. when $p^{\star}$ is known, inequalities \eqref{eq:lyap_iss_1} and \eqref{eq:lyap_iss_2} imply that the origin of the state estimation error system \eqref{eq:error_sys_general} is globally exponentially stable (note that (\ref{eq:lyap_iss_1}) can be equivalently stated with Euclidean norms as all norms are equivalent in $\mathbb{R}^{n_x}$). When $\tilde{p}_{i}\neq 0$, condition \eqref{eq:lyap_iss_2} needs to be satisfied, which is the case when system \eqref{eq:error_sys_general} is input-to-state {exponentially} stable  \cite{grune1999asymptotic} with respect to $\tilde{p}_{i}$ for example. In Section \ref{sec:special}, we will show that Luenberger observers and a class of nonlinear observers based on the circle criterion \cite{arcak2001nonlinear, chong2012robust} satisfy Assumption \ref{ass:obs_error_i}.

Following the terminology used in \cite{vu2011supervisory}, we call the bank of observers in \eqref{eq:observer_i} the multi-observer.

\begin{rem} \label{rem:obs}
	When Assumption \ref{ass:obs_error_i} holds with $\tilde{\gamma}$ which only depends on $\tilde{p}_{i}$ and $u$, we do not need system \eqref{eq:general_plant} to satisfy Assumption \ref{ass:obs_error_i} to guarantee the convergence of the estimates.
\end{rem}
\begin{rem}
	The Lyapunov-based conditions \eqref{eq:lyap_iss_1}-\eqref{eq:lyap_iss_2} stated in Assumption \ref{ass:obs_error_i} differ from the conditions in \cite[Equations (10a) and (10b)]{vu2011supervisory} and \cite[Theorem 4.3 (iii)]{vu2007iss} because the objective is to estimate the parameters and the states of \eqref{eq:general_plant}, whereas the available results consider the problem of stabilization of an equilibrium point without guarantees on the convergence of the parameter estimates.
\end{rem}
\vspace{1em}
\subsubsection{Monitoring signal} \label{sec:monitoring_signal}
Similar to \cite[Equation 6]{vu2011supervisory}, the monitoring signal associated with each observer is the exponentially weighted $\mathcal{L}_{\infty}$ norm \cite{ioannou1996robust} of the output error defined as, for $i\in\{1,\dots,N\}$,
\begin{equation}
	\mu_{i}(t) =  \int_{0}^{t} \exp(-\lambda (t-s)) |\tilde{y}_{i}(s)|_{\infty}^{2} ds,\qquad \forall t\geq 0, \label{eq:monitoring_signal}
\end{equation}
where $\lambda>0$ is a design parameter. The monitoring signal \eqref{eq:monitoring_signal} can be implemented as a linear filter, for $i \in \{1,\dots,N\}$,
\begin{equation}
\begin{array}{rllll}
	\dot{\mu}_{i}(t) & = & -\lambda \mu_{i}(t) + |\tilde{y}_{i}(t)|_{\infty}^{2} \qquad \forall t\geq 0\\
\mu_i(0) & = & 0.
\label{eq:cost_filter}
\end{array}
\end{equation}
We assume that the output error of each of the observers $\tilde{y}_{i}$ satisfies the following property.
\begin{assum} \label{ass:PE_y_tilde}
	For any $\Delta_{\tilde{x}}$, $\Delta_{x}$, $\Delta_{u}>0$, there exist a class $\mathcal{K}_{\infty}$ function $\alpha_{\tilde{y}}$ and a constant $T_{f}=T_{f}(\Delta_{\tilde{x}}, \Delta_{x}, \Delta_{u})>0$ such that for all $p_{i}\in\Theta$, with $i\in\{1,\ldots,N\}$, $\tilde{x}_{i}(0)\in \mathcal{H}(0,{\Delta_{\tilde{x}}})$, $x(0) \in \mathcal{H}(0,{\Delta_{{x}}})$, for some $u\in \mathcal{M}_{\Delta_u}$, the corresponding solution to systems \eqref{eq:general_plant} and \eqref{eq:error_sys_general} satisfies
	\begin{equation}
		\int_{t-T_{f}}^{t} |\tilde{y}_{i}(\tau)|_{\infty}^{2} d\tau \geq \alpha_{\tilde{y}}(|\tilde{p}_{i}|_{\infty}), \qquad \forall t\geq  T_f. \label{eq:tilde_y_PE}
	\end{equation} \hfill $\Box$
\end{assum}
The inequality \eqref{eq:tilde_y_PE} is known as a \emph{persistency of excitation (PE) condition} that appears in the identification and adaptive literature \cite{sastry2011adaptive}. It differs from the classical PE definition \cite[Definition 2.5.3]{sastry2011adaptive} in that we take the norm of the signal and we consider a family of systems \eqref{eq:error_sys_general} parameterised by $\tilde{p}_{i}$, where we consider only the lower bound (excitation level) in \eqref{eq:tilde_y_PE} which depends on $\tilde{p}_{i}$. In particular, if $\tilde{p}_{i}=0$, we do not require any PE property. In addition, the excitation level grows with the norm of the parameter error $\tilde{p}_{i}$. Hence, the integral term in \eqref{eq:tilde_y_PE} provides quantitative information about the parameter estimation error. Assumption \ref{ass:PE_y_tilde} holds when the output errors $\tilde{y}_{i}$,  $i\in\{1,\dots,N\}$, satisfy the classical PE condition according to the proposition below.
	\begin{prop} \label{prop:classic_PE}
		Consider system \eqref{eq:general_plant} and the observer \eqref{eq:observer_i}. Suppose that the following holds.
		\begin{enumerate}
			\item Assumption \ref{ass:stable_sys} is satisfied.
			\item The functions $f$ and $h$ in \eqref{eq:general_plant} and $\hat{f}$ in \eqref{eq:observer_i} are continuously differentiable.
			\item For all $\Delta_{\tilde{x}}$, $\Delta_{x}$, $\Delta_{u}>0$ and any $p_{i}\in\Theta$, $i\in\{1,\ldots,N\}$, with $\tilde{p}_{i}\neq 0$ there exist constants $T_{f}=T_{f}(\Delta_{\tilde{x}}, \Delta_{x}, \Delta_{u})$ and $\bar{\alpha}_{i}=\bar{\alpha}_{i}(\Delta_{\tilde{x}}, \Delta_{x}, \Delta_{u}, \tilde{p}_{i})>0$ such that for all
$\tilde{x}_{i}(0)\in \mathcal{H}(0,\Delta_{\tilde{x}})$, $x(0) \in \mathcal{H}(0,{\Delta_{x}})$ and for some $u\in \mathcal{M}_{\Delta_u}$, the corresponding solution to \eqref{eq:general_plant}, \eqref{eq:error_sys_general} satisfies 
			\begin{equation}
				\begin{array}{ll}
					\int_{t-T_{f}}^{t} \tilde{y}_{i}(\tau) \tilde{y}_{i}(\tau)^{T} d\tau \geq \bar{\alpha}_{i} \mathbb{I}, & \forall t\geq T_f. \\
				\end{array}
				 \label{eq:tilde_y_PE_classic}
			\end{equation}
		\end{enumerate}
		Then \eqref{eq:tilde_y_PE} holds.		\hfill $\Box$
	\end{prop}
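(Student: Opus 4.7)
The plan is to derive the infinity-norm PE condition \eqref{eq:tilde_y_PE} from the matrix-valued PE \eqref{eq:tilde_y_PE_classic} in two stages: first, extract a scalar lower bound on $\int_{t-T_{f}}^{t} |\tilde{y}_{i}(\tau)|_{\infty}^{2}\, d\tau$ from the matrix inequality; second, upgrade the pointwise constant $\bar{\alpha}_{i}(\tilde{p}_{i})$ into a single class $\mathcal{K}_{\infty}$ function of $|\tilde{p}_{i}|_{\infty}$ that is valid uniformly in $p_{i}\in\Theta$.

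The first stage is immediate. The matrix inequality \eqref{eq:tilde_y_PE_classic} is equivalent to $e^{T} \left( \int_{t-T_{f}}^{t} \tilde{y}_{i}(\tau)\tilde{y}_{i}(\tau)^{T}\, d\tau \right) e \geq \bar{\alpha}_{i}$ for every unit vector $e\in\mathbb{R}^{n_y}$; taking $e$ to be the $j$-th standard basis vector gives the scalar bound $\int_{t-T_{f}}^{t} \tilde{y}_{i,j}(\tau)^{2}\, d\tau \geq \bar{\alpha}_{i}$ for each component $j\in\{1,\ldots,n_y\}$. Since $|\tilde{y}_{i}(\tau)|_{\infty}^{2} \geq \tilde{y}_{i,j}(\tau)^{2}$ pointwise, integration yields
\begin{equation*}
\int_{t-T_{f}}^{t} |\tilde{y}_{i}(\tau)|_{\infty}^{2}\, d\tau \geq \bar{\alpha}_{i}, \qquad \forall t\geq T_{f}.
\end{equation*}

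For the second stage, I would leverage Assumption \ref{ass:stable_sys} and the $C^{1}$-regularity of $f,h,\hat{f}$ to argue, via classical ODE continuous-dependence results, that $\tilde{y}_{i}(\cdot)$ depends continuously on $\tilde{p}_{i}$ on bounded time intervals, uniformly over the bounded sets of initial conditions $\tilde{x}_{i}(0)\in\mathcal{H}(0,\Delta_{\tilde{x}})$, $x(0)\in\mathcal{H}(0,\Delta_{x})$ and inputs $u\in\mathcal{M}_{\Delta_u}$. Defining the worst-case constant $\bar{\alpha}(\tilde{p}) := \inf \bar{\alpha}_{i}$, taken over all admissible initial conditions and inputs (and over $t\geq T_f$ via a liminf on the smallest eigenvalue of the Gram matrix), this continuity propagates into a suitable semi-continuity of $\bar{\alpha}$ on the compact set $\Theta-\Theta$. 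Combined with the hypothesis that $\bar{\alpha}(\tilde{p})>0$ whenever $\tilde{p}\neq 0$, a compactness/contradiction argument shows that $\phi(r):=\inf_{|\tilde{p}|_{\infty}\geq r}\bar{\alpha}(\tilde{p})>0$ for every $r>0$. A standard construction from comparison-function theory then produces a class $\mathcal{K}$ function majorised by $\phi$ on the compact range of $|\tilde{p}_{i}|_{\infty}$, which extends to the desired class $\mathcal{K}_{\infty}$ function $\alpha_{\tilde{y}}$ on $\mathbb{R}_{\geq 0}$.

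The hard part is the semi-continuity step. Since $\bar{\alpha}(\tilde{p})$ is an infimum of functions continuous in $\tilde{p}$, it is only upper semi-continuous a priori, so ruling out a sequence $\tilde{p}_{k}\to\tilde{p}^{\star}\neq 0$ with $\bar{\alpha}(\tilde{p}_{k})\to 0$ requires a careful argument that exploits the uniform continuity provided by Assumption \ref{ass:stable_sys} and the $C^{1}$-regularity of the vector fields on bounded sets. Once this is handled, the manipulations that produce $\alpha_{\tilde{y}}$ from $\phi$ are routine.
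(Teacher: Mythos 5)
Your two-stage architecture is the same as the paper's: reduce the matrix PE condition \eqref{eq:tilde_y_PE_classic} to a scalar lower bound on $\int_{t-T_f}^{t}|\tilde{y}_{i}(\tau)|_{\infty}^{2}\,d\tau$, then upgrade the family of constants $\bar{\alpha}_{i}(\cdot,\tilde{p}_{i})$ to a single class $\mathcal{K}_{\infty}$ minorant via continuity and compactness of $\widetilde{\Theta}$. Your first stage is fine, and in fact slightly sharper than the paper's (which tests against arbitrary unit vectors and uses $|\nu^{T}\tilde{y}_{i}|\leq\sqrt{n_y}\,|\tilde{y}_{i}|_{\infty}$, losing a factor $n_y$).

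The genuine gap is exactly the step you label ``the hard part'' and then defer: you form the pointwise infimum $\bar{\alpha}(\tilde{p})$ over initial data, inputs and $t\geq T_f$, observe that an infimum of continuous functions is only upper semi-continuous, and assert without argument that a careful use of regularity will nonetheless exclude $\bar{\alpha}(\tilde{p}_k)\to 0$ along $\tilde{p}_k\to\tilde{p}^{\star}\neq 0$. As written this is not a proof, and the infima you introduce are also unnecessary: item 3 of the proposition already makes $\bar{\alpha}_{i}$ uniform over $\tilde{x}_{i}(0)\in\mathcal{H}(0,\Delta_{\tilde{x}})$ and $x(0)\in\mathcal{H}(0,\Delta_{x})$, and both hypothesis and conclusion are stated ``for some $u\in\mathcal{M}_{\Delta_u}$'', so only the dependence on $\tilde{p}_{i}$ needs to be controlled. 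The paper's device is to never form the infimum: it keeps $t$ as an explicit argument, defines $W(t,\tilde{p}):=n_y\int_{t-T_f}^{t}|\eta(s,\tilde{p})|_{\infty}^{2}\,ds$ with $\eta$ the output of \eqref{eq:error_sys_general}, and proves $W$ is jointly continuous in $(t,\tilde{p})$ --- the $C^{1}$ hypothesis on $f$, $\hat{f}$, $h$ together with Assumption \ref{ass:stable_sys} gives local Lipschitzness of $F_{i}$ uniformly in the remaining arguments, hence continuous dependence of the error solution on $\tilde{p}$, and uniform continuity of the integrand on $[t-T_f,t]\times\widetilde{\Theta}$ carries the continuity through the integral. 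Since $W(t,\tilde{p})\geq\bar{\alpha}_{i}(\tilde{p})>0$ for all $t\geq T_f$ whenever $\tilde{p}\neq 0$ (a bound uniform in $t$ directly from the hypothesis), one extracts a continuous positive definite minorant $\widetilde{W}(\tilde{p})\leq W(t,\tilde{p})$ and then invokes Lemma 3.5 of \cite{khalil1996nonlinear2} to bound $\widetilde{W}$ from below by a class $\mathcal{K}_{\infty}$ function of $|\tilde{p}|$, which yields \eqref{eq:tilde_y_PE}. To repair your write-up you should either adopt this route or supply the omitted semi-continuity argument explicitly; the rest of your proposal is sound.
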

Proposition \ref{prop:classic_PE} indicates that tools to verify the classical PE condition \eqref{eq:tilde_y_PE_classic} can be applied to ensure the satisfaction of Assumption \ref{ass:PE_y_tilde}. The reader is referred to \cite[Chapter 6]{narendra1989stable} and \cite{panteley2001relaxed} where a priori checkable conditions are respectively given for linear and nonlinear systems.
	\vspace{1em}
\subsubsection{Selection criterion}
The signal $\sigma:\mathbb{R}_{\geq 0} \rightarrow \{1,\dots,N\}$ is used to choose an observer from the bank of $N$ observers at every instant of time. It is defined as
\begin{equation} \label{eq:selection_criterion}
	\sigma(t)  :=  \underset{i\in\{1,\dots,N\}}{\arg\min} \mu_{i}(t),\qquad \forall t \geq 0.
\end{equation}
Note that no dwell time is guaranteed with this selection criterion, therefore rapid changes of the signal $\sigma$ are allowed\footnote{The hysteresis based switching in \cite{hespanha2001multiple} can be used instead of \eqref{eq:selection_criterion} to ensure the existence of a finite number of discontinuities of $\sigma$ over any given finite-time interval. The results of Sections \ref{sec:gen_results_static} and \ref{sec:results-dynamic} still apply in this case provided that a positive constant is added to the monitoring signal as done in \cite[Equation 9]{hespanha2001multiple} and that the hysteresis constant is sufficiently small.}. Nevertheless, these switches do not affect the dynamics of the observers \eqref{eq:observer_i} and system \eqref{eq:general_plant}.
\vspace{1em}
\subsubsection{Parameter and state estimates}
Based on the signal $\sigma$ in \eqref{eq:selection_criterion}, the estimated parameters and states respectively are, for all $t\geq 0$,
\begin{eqnarray}
	\hat{p}(t) & := & p_{\sigma(t)}, \label{eq:estimated_parameter} \\
	\hat{x}(t) & := & \hat{x}_{\sigma(t)}(t). \label{eq:estimated_state}
\end{eqnarray}
The parameter and the state estimates are discontinuous in general because these signals switch among a finite family of continuous trajectories that are in general different at the switching instant.

\subsection{Convergence guarantees} \label{sec:gen_results_static}
The theorem below states that the estimated parameter $\hat{p}$ and state $\hat{x}$ in \eqref{eq:estimated_parameter} and \eqref{eq:estimated_state} are respectively guaranteed to converge to their true values $p^{\star}$ and $x$ up to some selected margins $\nu_{\tilde{p}}$ and $\nu_{\tilde{x}}>0$, provided that the number of observers $N$ is sufficiently large.

\begin{thm} \label{thm:main_static}
	Consider system \eqref{eq:general_plant}, the multi-observer \eqref{eq:observer_i}, the monitoring signals \eqref{eq:monitoring_signal}, the selection criterion \eqref{eq:selection_criterion}, the parameter estimate \eqref{eq:estimated_parameter} and the state estimate \eqref{eq:estimated_state}. Suppose Assumptions  \ref{ass:stable_sys}-\ref{ass:PE_y_tilde} are satisfied. For any $\Delta_{\tilde{x}}$, $\Delta_x$, $\Delta_u>0$ and any margins $\nu_{\tilde{x}}$, $\nu_{\tilde{p}}>0$, there exist $T$, $\bar{K}_{\tilde{x}}>0$ and a sufficiently large $N^{\star}\in\mathbb{N}_{\geq 1}$ such that for any $N\geq N^{\star}$, the following holds for all $(x(0),\tilde{x}_{i}(0)) \in \mathcal{H}(0,{\Delta_x}) \times \mathcal{H}(0,{\Delta_{\tilde{x}}})$ for $i\in\{1,\dots,N\}$ and for any $u \in \mathcal{M}_{\Delta_u}$ that satisfies Assumption \ref{ass:PE_y_tilde},
	\begin{equation}
		\begin{array}{c}
		 \left|\tilde{p}_{\sigma(t)}(t) \right|_{\infty} \leq \nu_{\tilde{p}}\;\; \forall t\geq T,  \qquad \underset{t\to \infty}{\limsup}\left|\tilde{x}_{\sigma(t)}(t) \right|_{\infty} \leq \nu_{\tilde{x}}, \qquad \left| \tilde{x}_{\sigma(t)}(t) \right|_{\infty} \leq \bar{K}_{\tilde{x}}\;\;\forall t \geq 0. \label{eq:main_static_results}
		\end{array}
	\end{equation}
	\hfill $\Box$
\end{thm}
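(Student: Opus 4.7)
The plan is to compare the monitoring signal of the observer selected by \eqref{eq:selection_criterion} with that of the observer whose sample is closest to $p^{\star}$, and to sandwich $\mu_{\sigma(t)}(t)$ between a PE-based lower bound and a continuity-based upper bound. First, I would use Assumption~\ref{ass:stable_sys} to bound $|x(t)|_{\infty}$, and then exploit the dissipation inequality \eqref{eq:lyap_iss_2}: since $(\tilde p_i, x(t), u(t))$ remains in a compact set and $\tilde\gamma$ is continuous, $\tilde\gamma(\tilde p_i, x(t), u(t)) \le M$ uniformly in $i$ and $t$; the comparison lemma applied to \eqref{eq:lyap_iss_1}--\eqref{eq:lyap_iss_2} then yields a uniform ultimate bound on $|\tilde x_i(t)|_{\infty}$ valid for every $i\in\{1,\dots,N\}$ and every $t\ge 0$, from which the constant $\bar K_{\tilde x}$ and the third inequality of \eqref{eq:main_static_results} follow. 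Continuity of $h$ on the resulting compact set also gives a uniform bound on $|\tilde y_i(t)|_{\infty}$.

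Next, let $i^{\star}$ be an index achieving $\bar d := d(p^{\star}, \widehat\Theta) = |p_{i^{\star}} - p^{\star}|_{\infty}$; by the sampling property of Section~\ref{sec:static_sampling}, $\bar d \to 0$ as $N \to \infty$. Because $\tilde\gamma(0,x,u)=0$ and $\tilde\gamma$ is continuous on the compact domain under consideration, there is a continuous function $\rho$ with $\rho(0)=0$ such that $\tilde\gamma(\tilde p_{i^{\star}}, x(t), u(t)) \le \rho(\bar d)$ for every $t \ge 0$. Integrating \eqref{eq:lyap_iss_2} and using \eqref{eq:lyap_iss_1} gives $|\tilde x_{i^{\star}}(t)|_{\infty}^{2} \le \tfrac{a_2}{a_1}|\tilde x_{i^{\star}}(0)|_{\infty}^{2}\,e^{-\lambda_0 t} + \rho(\bar d)/(a_1\lambda_0)$; by uniform continuity of $h$ on the compact region where the arguments lie, there exist a continuous function $\psi$ with $\psi(0)=0$ and a time $T_1>0$ such that $|\tilde y_{i^{\star}}(t)|_{\infty}^{2} \le \psi(\bar d)$ for every $t \ge T_1$. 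Substituting into the filter \eqref{eq:cost_filter} and splitting the integral \eqref{eq:monitoring_signal} at $T_1$ yields $\mu_{i^{\star}}(t) \le 2\psi(\bar d)/\lambda$ for all $t\ge T_2$, for some $T_2\ge T_1$ absorbing the transient.

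For the matching lower bound, fix $t\ge T_f$ and set $j=\sigma(t)$. Restricting the defining integral of $\mu_j(t)$ to $[t-T_f,t]$, where $e^{-\lambda(t-s)} \ge e^{-\lambda T_f}$, and applying Assumption~\ref{ass:PE_y_tilde} to the trajectory of observer $j$ yields $\mu_{\sigma(t)}(t) \ge e^{-\lambda T_f}\,\alpha_{\tilde y}(|\tilde p_{\sigma(t)}|_{\infty})$. Combined with the minimality \eqref{eq:selection_criterion}, $\mu_{\sigma(t)}(t)\le \mu_{i^{\star}}(t)$, this gives $\alpha_{\tilde y}(|\tilde p_{\sigma(t)}|_{\infty}) \le 2\,e^{\lambda T_f}\,\psi(\bar d)/\lambda$ for every $t \ge T:=\max(T_f, T_2)$. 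Inverting $\alpha_{\tilde y}\in\mathcal K_{\infty}$ and choosing $N$ large enough so that $\bar d$ is sufficiently small forces $|\tilde p_{\sigma(t)}|_{\infty} \le \nu_{\tilde p}$, which is the first inequality of \eqref{eq:main_static_results}.

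For the asymptotic state-error bound I would observe that the eligible set $\{i:|\tilde p_i|_{\infty} \le \nu_{\tilde p}\}$ is finite, and that for each such $i$ inequalities \eqref{eq:lyap_iss_1}--\eqref{eq:lyap_iss_2} give $\limsup_{t\to\infty}|\tilde x_i(t)|_{\infty} \le \sqrt{\rho(\nu_{\tilde p})/(a_1\lambda_0)}$. Since $\sigma(t)$ belongs to this finite set for $t\ge T$, the limsup of $|\tilde x_{\sigma(t)}(t)|_{\infty}$ inherits the same bound, and shrinking $\nu_{\tilde p}$ (hence increasing $N$) further makes it smaller than $\nu_{\tilde x}$. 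The main obstacle is the potentially fast switching of $\sigma(t)$: both the PE lower bound and the upper bound from $i^{\star}$ have to be read pointwise in $t$, with each inequality applying to the fixed index $\sigma(t)$ at that instant and probing the full trajectory of that \emph{single} observer, not of the switched signal. A secondary technical point is ensuring that $\rho$, $\psi$ and the times $T_1,T_2,T_f$ depend only on the a priori bounds $\Delta_{\tilde x},\Delta_x,\Delta_u$ and not on $N$, so that $\bar d$ may be driven to zero independently of the other constants appearing in the estimate.
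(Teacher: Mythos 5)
Your proposal is correct and follows essentially the same route as the paper's proof: the same decomposition into an ISS-type bound on each $\tilde{x}_{i}$ (the paper's Lemma \ref{lem:bound_tilde_x}), a PE-based lower bound and a Lipschitz/ISS-based upper bound sandwiching the monitoring signals (the paper's Lemma \ref{lem:bound_mu}), comparison of $\mu_{\sigma(t)}$ with the signal of the closest sample, and the finite-eligible-set argument for the $\limsup$ of the state error. The only imprecision is that your $\psi$ cannot satisfy $\psi(0)=0$ for a fixed $T_{1}$ (the transient of $\tilde{x}_{i^{\star}}$ leaves a residual independent of $\bar d$), so the upper bound on $\mu_{i^{\star}}$ must carry an additive $\epsilon$ made small by enlarging the waiting time before $N$ is chosen, exactly as in the paper's $\bar{\chi}(|\tilde{p}_{i}|_{\infty})+\epsilon$.
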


The accuracy of the estimates can be rendered as accurate as desired by increasing the number of observers $N$. To ensure the properties in (\ref{eq:main_static_results}), what we actually need is that a selected parameter $p_i$ is sufficiently close to $p^{\star}$ so that $d(p^{\star},\widehat{\Theta})$ in (\ref{eq:pt_set_dist}) is sufficiently small.  This is guaranteed by taking $N$ to be large with a sampling of $\Theta$ which ensures (\ref{eq:pt_set_dist}). For some applications, it is sufficient to have estimates that are accurate within some margin of error. This is the case for instance for the anticipation of abnormal neural behaviour such as seizures caused by epilepsy, where the model parameter set is composed of seizure and non-seizure related subsets \cite{wendling2005interictal}. Therefore, an algorithm that provides estimates of the parameters within some adjustable margin can be used to infer the region of the parameter set in which the true parameter lies.

A potential drawback of the scheme presented in this section is the need for a sufficiently large number of observers to ensure that the estimates fall within the required margins. While this may be feasible for some applications, it may be computationally intensive for others. We explain how to overcome this issue in the next section.

\section{Supervisory observer with a dynamic sampling policy} \label{sec:framework_dynamic}
Contrary to Section \ref{sec:framework_static}, we now update the sampling of the parameter set $\Theta$ at some time $t_k$, $k\in\mathbb{N}$. The proposed dynamic sampling policy builds upon the results of Section \ref{sec:framework_static} in that with a sufficient number of observers $N$, the parameter estimation error converges to a given margin in finite-time according to Theorem \ref{thm:main_static}. Once this happens, a new set of $N$ parameters is chosen within the hypercube centered at the latest parameter estimate with an edge length which is proportional to the aforementioned margin. This  aims to provide a better estimate of the parameter and the state for a given number of observers by zooming into the parameter set at each update time.

\subsection{Description}\label{sec:description-dynamic}
For the sake of simplicity and without loss in generality, we assume that $\Theta$ is a hypercube\footnote{We can always find a hypercube which contains $\Theta$ (since $\Theta$ is a compact set) and take the hypercube to be the parameter set. It has to be noted that to embed a given parameter set into a hypercube may be conservative, as we may end up working with a `much larger set' compared to the initial one. In this case, the parameters may be scaled to reduce this conservatism.} centered at some known $p_c\in\mathbb{R}^{n_p}$ and of edge length $2\Delta>0$. Let $N\in\mathbb{N}_{\geq 1}$ be the number of parameter samples and $\nu_{\tilde{p}}>0$ be the acceptable margin of error for the parameter estimate.

At the initial time $t_0=0$, we select $N$ values in $\Theta$ to form the initial sampled parameter set $\widehat{\Theta}(0):=\{p_1(0),\dots,p_{N}(0)\}$. The way the set $\widehat{\Theta}(0)$ is defined is as explained in the following. From the results of the static policy in Section \ref{sec:framework_static} (in particular \eqref{eq:main_static_results}), we know that after a sufficiently long time $T$ (see \eqref{eq:main_static_results}) and for a sufficiently large $N$, the norm of the parameter estimation error will be less than $C:=\max\{\alpha\Delta,\nu_{\tilde{p}}\}$ where $\alpha\in(0,1)$ is a design constant, which we call the zooming factor. In the case where $C=\alpha \Delta$, we know that $p^{\star}$ is in the hypercube $\mathcal{H}(\hat{p}(t_1^{-}),\alpha\Delta)$ for $t_{1} \geq T$. We can thus select $N$ parameter values in the hyperrectangle $\Theta(1):=\mathcal{H}(\hat{p}(t_1^{-}),\alpha\Delta) \cap \Theta$ (noting that $\mathcal{H}(\hat{p}(t_1^{-}),\alpha\Delta)$  is not necessarily included in $\Theta$) to form a new set of sampled parameters $\widehat{\Theta}(1):=\{p_{1}(1),\dots,p_{N}(1)\}$, see Figure \ref{fig:set}.
When $C=\nu_{\tilde{p}}$, the desired result is obtained. Nevertheless, we cannot detect this case on-line as we do not know $\tilde{p}_{\sigma}$. Hence, we apply the same procedure as when $C=\alpha\Delta$, even though $p^{\star}$ is no longer guaranteed to be in $\mathcal{H}(\hat{p}(t_1^{-}),\alpha\Delta)$, which is fine as the norm of the induced parameter estimation error will remain less than $\nu_{\tilde{p}}$ for all future times. This procedure is carried out iteratively at each update time instant $t_k$, $k\in\mathbb{N}$ which verifies
\begin{equation} \label{eq:update_time_interval}
	T_{d}:=t_{k+1}-t_{k}>0, \qquad \forall k\in\mathbb{N},
\end{equation}
where $T_d>0$ is a design parameter which is selected larger than $T$. 
The sets $\widehat{\Theta}(k)$, for $k\in\mathbb{N}$, are defined such that the property below is verified
\begin{equation}
\begin{array}{llllll}
p^{\star}\in\Theta(k)  & \Rightarrow & d\left(p^{*},\widehat{\Theta}(k)\right) \leq \pi(\Delta,N),
\end{array}\label{eq:dyn-sampling}
\end{equation}
where $\pi\in\mathcal{KL}$. In that way, for any $k\in\mathbb{N}$, $d(p^{*},\widehat{\Theta}(k))\to 0$ as $N\to\infty$ like in Section \ref{sec:framework_static} (as long as $p^{\star}\in\Theta(k)$). An example of sampling which ensures (\ref{eq:dyn-sampling}) is provided below.

\begin{ex}\label{ex:sampling} It can be noted that the sets $\Theta(k)$, $k\in\mathbb{N}$, constructed in this section are hyperrectangles, i.e. 
\begin{equation}
	\Theta(k)  =  \left\{p=(p^{1},\ldots,p^{n_{p}})\,:\,\forall j\in\{1,\ldots,n_{p}\}\,\,|p^{j}-p_{c}^{j}(k)|\leq\delta^{j}(k)\right\},
\end{equation}
where $p_{c}(k)=(p_{c}^{1}(k),\ldots,p_{c}^{n_{p}}(k))$ is the center and the constants $\delta^{j}(k)>0$, $j\in\{1,\ldots,n_{p}\}$, define the edge lengths. Moreover it holds that $\Theta(k)\subset\Theta$, hence $\delta^{j}(k)\leq\Delta$ for any $j\in\{1,\ldots,n_{p}\}$. Let $N=m^{n_{p}}$ with $m\in\mathbb{N}_{\geq 1}$, we partition $\Theta(k)$ into $N$ hyperrectangles such that their union is equal to $\Theta(k)$ and the intersection of their interiors is empty. These $N$ sets have the form $\{p=(p^{1},\ldots,p^{n_{p}})\,:\, \forall j\in\{1,\ldots,n_{p}\}\,\,|p^{j}-p_{c,i}^{j}(k)|\leq\frac{\delta^{j}(k)}{m}\}$ where $p_{c,i}$ is the center, $i\in\{1,\ldots,N\}$. Suppose $p^{\star}\in\Theta(k)$, then $p^{\star}=(p^{\star}_{1},\dots,p^{\star}_{n_p})$ belongs to (at least) one of these $N$ hyperrectangles. As a consequence, $|p^{\star}_{j}-p_{c,i^{*}}^{j}(k)|\leq\frac{\delta^{j}(k)}{m}$ for some $i^{\star}\in\{1,\ldots,N\}$ and any $j\in\{1,\ldots,n_{p}\}$. Consequently $|p^{\star}-p_{c,i^{\star}}|_{\infty} \leq {\underset{j\in\{1,\ldots,n_{p}\}}\max}\frac{\delta^{j}(k)}{m} \leq \frac{\Delta}{m}$ as $\delta^{j}(k)\leq\Delta$ for any $j\in\{1,\ldots,n_{p}\}$. Finally, we derive that $d\left(p^{\star},\widehat{\Theta}(k)\right)\leq|p^{\star}-p_{c,i^{\star}}|_{\infty}\leq \frac{\Delta}{N^{\frac{1}{n_{p}}}}$. Noting that $\pi:(s,r) \mapsto \min\left\{s,\frac{s}{r^{\frac{1}{n_p}}}\right\}$, we have that (\ref{eq:dyn-sampling}) holds. \hfill $\Box$
\end{ex}

\begin{figure}[h!]
	\includegraphics[width=8cm]{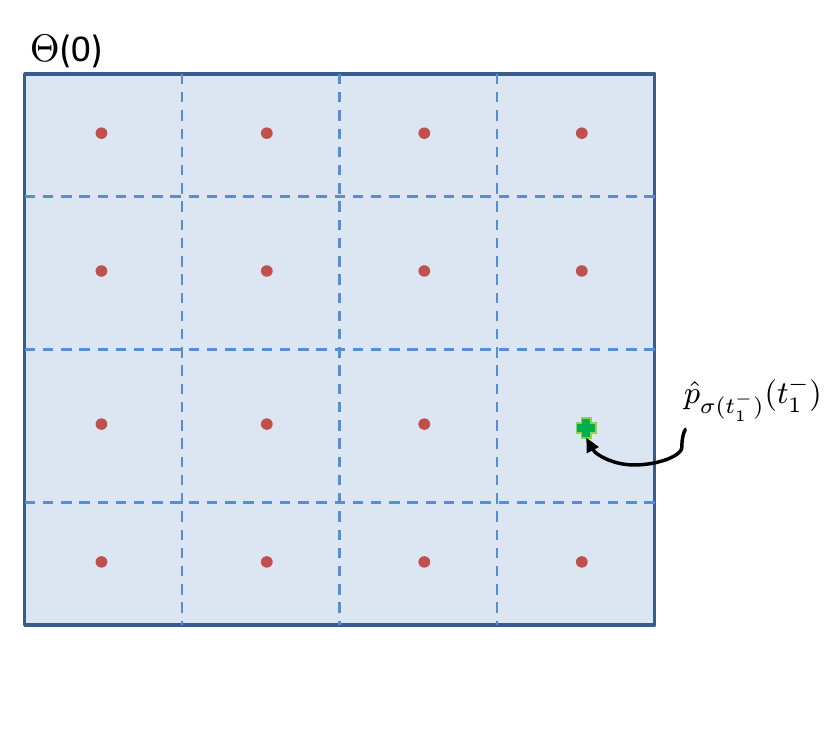}
	\includegraphics[width=8cm]{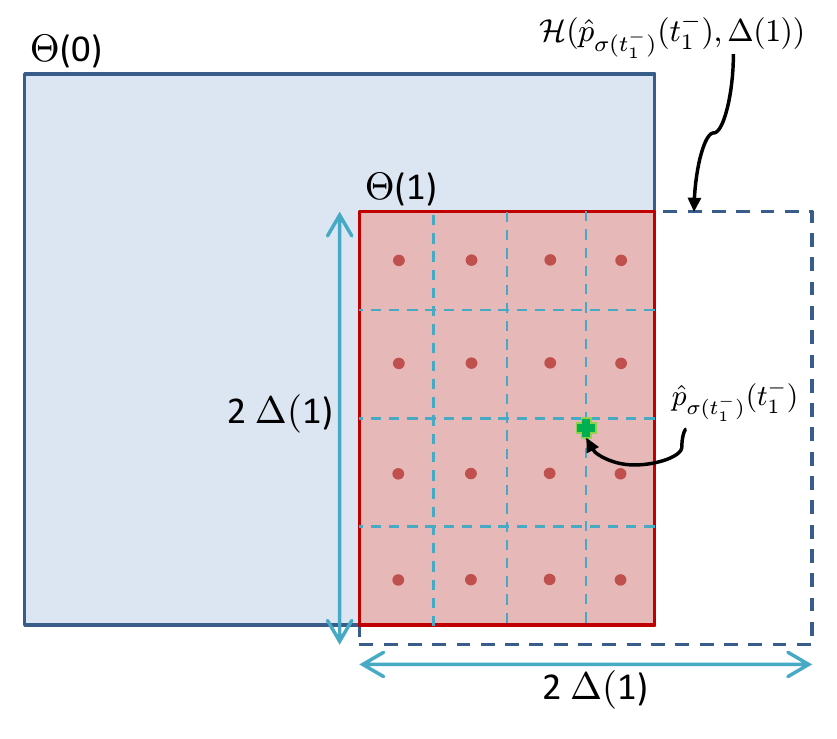}
	\vspace{-2em}
	\caption{Illustration of sets $\Theta(0)$ (shaded blue region), $\Theta(1)$ (shaded red region), $\widehat{\Theta}(0)$ and $\widehat{\Theta}(1)$. Legend: red dots indicate the sampled parameters $p_i$ for $i\in\{1,\dots,N\}$, green cross indicates the selected parameter $\hat{p}_{\sigma(t_1^{-})}(t_1^{-})$.} \label{fig:set}
\end{figure}

We summarize the dynamic sampling policy below.
\begin{itemize}
	\item \underline{At $t_0=0$.} Let $\Delta(0)=\Delta$ and $\Theta(0)=\Theta$. The set $\widehat{\Theta}(0)$ is obtained by discretizing the set $\Theta(0)$ with $N$ points such that (\ref{eq:dyn-sampling}) holds.

	\item \underline{At $t=t_k$ for $k\in\mathbb{N}$.} Let
			\begin{equation}
				\Delta(k)=\alpha \Delta(k-1),  \label{eq:delta_k_def}
			\end{equation}
		where the zooming factor  $\alpha \in (0,1)$ is a design parameter. We define the zoomed-in parameter set $\Theta(k)$ as
		 	\begin{equation}
				\Theta(k) := \mathcal{H}(\hat{p}(t_k^{-}),\Delta(k)) \cap \Theta(k-1) \cap \dots \cap \Theta(0). \label{eq:theta_k_def}
			\end{equation}
		   The sampled parameter set $\widehat{\Theta}(k):=\{p_{1}(k),\dots,p_{N}(k)\}$ consists of $N$ points which are selected such that (\ref{eq:dyn-sampling}) is verified.		
\end{itemize}


\noindent The dynamic sampling policy leads to the following changes in two components of the supervisory observer, namely the multi-observer \eqref{eq:observer_i} and the implementation of the monitoring signals \eqref{eq:monitoring_signal} described in Section \ref{sec:framework_static}. In the multi-observer, a state observer is designed for each sampled parameter $p_i(t)$, $i\in\{1,\dots,N\}$ for $t\in[t_k,t_{k+1})$,
\begin{equation}
	\begin{array}{rll}
		\dot{\hat{x}}_{i} & = & \hat{f}(\hat{x}_{i},p_i,u,y), \qquad \forall t \in [t_k,t_{k+1}), \, k\in\mathbb{N}, \\
		\hat{y}_{i} & = & h(\hat{x}_{i}, p_i), \\
		\hat{x}_{i}(t_k) & = &   \hat{x}_{i}(t_k^{-}),
	\end{array} \label{eq:observer_i_dynamic}
\end{equation}
where $\hat{x}_{i}\in \mathbb{R}^{n_x}$ is the state estimate provided by observer $i$. Note that the state $\hat{x}_{i}$ does not jump at update times $t_k$, $k\in\mathbb{N}$. Secondly, the definition of the monitoring signals \eqref{eq:monitoring_signal} remains unchanged, but the implementation as linear filters become, for $i\in\{1,\dots,N\}$,
\begin{equation}
	\begin{array}{rll}
	\dot{\mu}_{i}(t) & = & -\lambda \mu_{i}(t) + |\tilde{y}_{i}(t)|_{\infty}^{2}, \qquad \forall t \in [t_k,t_{k+1}), \, k\in\mathbb{N}, \\
	\mu_{i}(t_k) & = & 0.
	\end{array} \label{eq:cost_filter_dynamic}
\end{equation}

\subsection{Convergence guarantees}\label{sec:results-dynamic}
We are now ready to state the main result of this section.
\begin{thm} \label{thm:main_dynamic}
	Consider system \eqref{eq:general_plant}, the multi-observer \eqref{eq:observer_i_dynamic}, the monitoring signals \eqref{eq:cost_filter_dynamic}, the selection criterion \eqref{eq:selection_criterion}, the parameter estimate \eqref{eq:estimated_parameter}, the state estimate \eqref{eq:estimated_state} and the dynamic sampling policy \eqref{eq:update_time_interval}-\eqref{eq:theta_k_def}. Suppose Assumptions  \ref{ass:stable_sys}-\ref{ass:PE_y_tilde} are satisfied. For any $\Delta_{\tilde{x}}$, $\Delta_x$, $\Delta_u>0$, any margins $\nu_{\tilde{x}}$, $\nu_{\tilde{p}}>0$ and zooming factor $\alpha \in (0,1)$, there exist $\overline{T},\bar{K}_{\tilde{x}}>0$, sufficiently large $T^{\star}>0$ and $N^{\star}\in\mathbb{N}_{\geq 1}$ such that for any $N \geq N^{\star}$ and $T_d \geq T^{\star}$, for all $(x(0),\tilde{x}_{i}(0)) \in \mathcal{H}(0,{\Delta_x}) \times \mathcal{H}(0,{\Delta_{\tilde{x}}})$ for $i\in\{1,\dots,N\}$ and for any $u \in \mathcal{M}_{\Delta_u}$ that satisfies Assumption \ref{ass:PE_y_tilde}, the following holds
	\begin{equation}
		\begin{array}{c}
		\left|\tilde{p}_{\sigma(t)}(t) \right|_{\infty}\leq \nu_{\tilde{p}}\,\, \,\,\forall t\geq \overline{T},\, \qquad \underset{t\to \infty}{\limsup}\left|\tilde{x}_{\sigma(t)}(t) \right|_{\infty} \leq \nu_{\tilde{x}}, \qquad \left| \tilde{x}_{\sigma(t)}(t) \right|_{\infty} \leq \bar{K}_{\tilde{x}},\,\forall t \geq 0. \label{eq:main_dynamic_results}
		\end{array}
	\end{equation} \hfill $\Box$
\end{thm}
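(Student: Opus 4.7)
The plan is to prove Theorem \ref{thm:main_dynamic} by induction on the update index $k\in\mathbb{N}$, applying Theorem \ref{thm:main_static} on each interval $[t_k,t_{k+1})$. On every such interval the architecture coincides with the static one, the only differences being that the monitoring signals are reset at $t_k$ and that parameters are drawn from the zoomed set $\Theta(k)$ rather than from $\Theta$. A preliminary step is to derive a state-error bound $\bar{K}_{\tilde{x}}$ that is uniform in $t$, in the observer index $i$, and in the update index $k$. Because the state estimates $\hat{x}_i$ do not jump at update times (only the parameters used by each observer change), integrating (\ref{eq:lyap_iss_2}) while using Assumption \ref{ass:stable_sys} to bound $x$, the bound $|\tilde{p}_i|_\infty\le\Delta$ inherited from $\Theta(k)\subseteq\Theta$, and continuity of $\tilde{\gamma}$ yields such a bound. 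This $\bar{K}_{\tilde{x}}$ then plays the role of $\Delta_{\tilde{x}}$ in every subsequent invocation of Theorem \ref{thm:main_static}, so that the constants coming from that theorem do not drift with $k$.

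Next I would run the induction. Assume $p^\star\in\Theta(k)$, which holds for $k=0$ by definition of $\Theta(0)=\Theta$. By (\ref{eq:dyn-sampling}) the density $d(p^\star,\widehat{\Theta}(k))\le\pi(\Delta(k),N)$ can be made arbitrarily small by enlarging $N$. Applying Theorem \ref{thm:main_static} to $[t_k,t_k+T)$ with $\Delta_{\tilde{x}}=\bar{K}_{\tilde{x}}$ and parameter margin $\alpha\Delta(k)$ produces constants $N_k^\star$ and $T_k$ such that, for $N\ge N_k^\star$ and $T_d\ge T_k$, $|\tilde{p}_{\sigma(t)}|_\infty\le\alpha\Delta(k)=\Delta(k+1)$ on $[t_k+T_k,t_{k+1})$. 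The constants from Theorem \ref{thm:main_static} depend on $\bar{K}_{\tilde{x}}$ and $\Delta$ through the PE characterisation and the filter gain $\lambda$, both independent of $k$, while the sampling density required to reach the margin $\alpha\Delta(k)$ is of order $\Delta(k)/N^{1/n_p}$ and is therefore achievable with a single $N$ thanks to the zoom-in (see Example \ref{ex:sampling}). Hence $N^\star:=\sup_k N_k^\star$ and $T^\star:=\sup_k T_k$ are finite. At $t=t_{k+1}^-$ the bound delivers $p^\star\in\mathcal{H}(\hat{p}(t_{k+1}^-),\Delta(k+1))\cap\Theta(k)\subseteq\Theta(k+1)$, preserving the induction hypothesis.

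Since $\Delta(k)=\alpha^k\Delta$ decays geometrically, some finite $k^\dagger$ satisfies $\Delta(k^\dagger+1)\le\nu_{\tilde{p}}$, after which $|\tilde{p}_{\sigma(t)}|_\infty\le\nu_{\tilde{p}}$ for all subsequent times; setting $\overline{T}:=k^\dagger T_d+T^\star$ yields the parameter-error claim. The state-error claim $\limsup_{t\to\infty}|\tilde{x}_{\sigma(t)}(t)|_\infty\le\nu_{\tilde{x}}$ then follows by integrating (\ref{eq:lyap_iss_2}) along the selected trajectory: continuity of $\tilde{\gamma}$ on the compact set where $x$ and $u$ live (via Assumption \ref{ass:stable_sys} and $u\in\mathcal{M}_{\Delta_u}$) together with $\tilde{\gamma}(0,x,u)=0$ renders the perturbation $\tilde{\gamma}(\tilde{p}_{\sigma(t)},x,u)$ as small as desired once $|\tilde{p}_{\sigma(t)}|_\infty$ is small, which is in turn forced by selecting $N$ large enough.

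The principal obstacle is that the monitoring signals $\mu_i$ are reset at every update time while the $\tilde{x}_i$ are not, so the PE-induced separation of the winning $\mu_{\sigma(t)}$ from its competitors that underlies the proof of Theorem \ref{thm:main_static} has to be rebuilt on each interval. This forces $T_d$ to exceed both the convergence time $T$ from Theorem \ref{thm:main_static} and the PE window $T_f$ from Assumption \ref{ass:PE_y_tilde}, and makes the uniform state-error bound $\bar{K}_{\tilde{x}}$ derived at the outset essential, because without it the per-interval constants produced by Theorem \ref{thm:main_static} could grow with $k$ and the suprema $N^\star$, $T^\star$ would fail to be finite.
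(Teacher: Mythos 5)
Your overall strategy (induction over update times, contraction of the parameter error by the factor $\alpha$ at each stage, carrying forward a uniform state-error bound $\bar{K}_{\tilde{x}}$ so that per-stage constants do not drift) is the same skeleton as the paper's proof. However, there is a genuine gap in the induction step. You claim that with a single $N^{\star}=\sup_k N_k^{\star}$ and $T^{\star}=\sup_k T_k$ one gets $|\tilde{p}_{\sigma(t_{k+1}^-)}(t_{k+1}^-)|_{\infty}\leq \alpha\Delta(k)=\Delta(k+1)$ for \emph{every} $k$, hence $p^{\star}\in\Theta(k)$ for all $k$ and a parameter error decaying like $\alpha^{k}\Delta$. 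This cannot hold. The per-stage bound coming from Lemma \ref{lem:bound_mu} has the form $|\tilde{p}_{\sigma}|_{\infty}\leq \underline{\chi}^{-1}\bigl(\bar{\chi}(d(p^{\star},\widehat{\Theta}(k)))+\epsilon\bigr)$, where $\epsilon>0$ is an \emph{additive} error produced by the transient of $\tilde{x}_i$ and the initial segment of the monitoring integral; it shrinks only by increasing the waiting time, not by refining the sampling. Your justification for the finiteness of the suprema addresses only the sampling-density term, which indeed scales like $\Delta(k)/N^{1/n_p}$, but not $\epsilon$: with $\Delta_{\tilde{x}}=\bar{K}_{\tilde{x}}$ fixed at every restart and $T_d$ fixed, $\epsilon$ is bounded away from zero, so the achievable margin saturates at $\underline{\chi}^{-1}(\epsilon)>0$ while your target $\alpha\Delta(k)\to 0$. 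Requiring $\underline{\chi}^{-1}(\bar{\chi}(\pi(s,N))+\epsilon)\leq\alpha s$ for all $s\in(0,\Delta]$ is impossible (the left side tends to $\underline{\chi}^{-1}(\epsilon)$ as $s\to 0$), so $\sup_k T_k=\infty$ and the induction breaks once $\Delta(k)$ falls below that saturation level. There is also no homogeneity to exploit: $\underline{\chi}$, $\bar{\chi}$ and $\bar{\gamma}_{\tilde{x}}$ are merely $\mathcal{K}_{\infty}$, so even the scaling part of your argument would need more care.

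The missing idea is a \emph{floor} on the zoom: the paper introduces $\Delta_{\infty}\in(0,\Delta)$ chosen so that $\underline{\chi}^{-1}(\bar{\chi}(\pi(\Delta_{\infty},0))+\epsilon)<\nu_1\leq\nu_{\tilde{p}}$ and $\underline{\chi}^{-1}(\epsilon)<\alpha\Delta_{\infty}$, and requires the contraction $\underline{\chi}^{-1}(\bar{\chi}(\pi(s,N^{\star}))+\epsilon)\leq\alpha s$ only for $s\in[\Delta_{\infty},\Delta]$, which a finite $N^{\star}$ and a single $T_d$ can achieve. The induction then splits into two cases: while $\Delta(k-1)>\Delta_{\infty}$ the contraction holds and $p^{\star}\in\Theta(k)$ is preserved; once $\Delta(k-1)\leq\Delta_{\infty}$ one no longer claims $p^{\star}\in\Theta(k)$ (indeed the paper and its simulations note that $p^{\star}$ may leave the zoomed set), but the error is already below $\nu_1\leq\nu_{\tilde{p}}$ and stays there. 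You need to restructure your induction around this saturation phenomenon; as written, the claims ``$|\tilde{p}_{\sigma}|_{\infty}\leq\Delta(k+1)$ for all $k$'' and ``$T^{\star}=\sup_k T_k<\infty$'' are false.
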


\noindent Theorem \ref{thm:main_dynamic} states that the estimated parameters $\hat{p}$ and states $\hat{x}$ in \eqref{eq:estimated_parameter} and \eqref{eq:estimated_state}, are respectively ensured to converge to their true values $p^{\star}$ and $x$ within some selected margins $\nu_{\tilde{p}}$ and $\nu_{\tilde{x}}$. These guarantees are the same as those in Theorem \ref{thm:main_static}. Nevertheless, for a given set of $N$ observers, Theorem \ref{thm:main_dynamic} may only ensure a better accuracy of the estimates compared to Theorem \ref{thm:main_static}. Indeed, consider a number of observers $N$ (sufficiently large). On the interval $[0,t_{1}]$, the static and the dynamic schemes provide the same estimates. Then, at time $t_{1}$, the dynamic scheme will start the zoom-in procedure which may only reduce the estimation error on the parameter and thus on the state. Hence, the dynamic scheme may be used to reduce the number of observers needed to ensure the convergence of the parameter and state estimation error up to given margins, as illustrated in Section \ref{sec:example}.

\section{Applications} \label{sec:special}
In this section, we apply the results of Theorem \ref{thm:main_dynamic} to two case studies: stable linear systems and a class of nonlinear systems.

\subsection{Linear systems} \label{sub:special_linear}
We consider the linear system
\begin{eqnarray}
	\dot{x} & = & A(p^{\star}) x + B(p^{\star})u \nonumber \\
	y & = & C(p^{\star}) x, \label{eq:lin_plant}
\end{eqnarray}
where $x\in\mathbb{R}^{n_x}$, $u\in\mathbb{R}^{n_u}$, $y\in\mathbb{R}^{n_y}$, $p^{\star}\in\Theta \subset \mathbb{R}^{n_p}$. The matrices $A(p)$, $B(p)$ and $C(p)$ are continuous in $p$ on $\Theta$, the pair $(A(p),C(p))$ is detectable for any $p\in\Theta$ and $A(p^{\star})$ is Hurwitz, which ensures the satisfaction of Assumption \ref{ass:stable_sys}.

Each observer in \eqref{eq:observer_i} is designed as follows for $p_i\in\Theta$, $i\in\{1,\dots,N\}$,
\begin{eqnarray}
	\dot{\hat{x}}_{i} & = & A(p_i) \hat{x}_{i} + B(p_i) u + L(p_i)(C(p_i)\hat{x}_{i} - y) \nonumber \\
	\hat{y}_{i} & = & C(p_i) \hat{x}_{i}, \label{eq:lin_obs}
\end{eqnarray}
where $L(p_i)$ is such that $A(p_i) + L(p_i) C(p_i)$ is Hurwitz (this is always possible since $(A(p_{i}),C(p_{i}))$ is detectable). The proposition below shows that Assumption \ref{ass:obs_error_i} is satisfied.

\begin{prop} \label{prop:lin_obs_error}
	Consider the linear system \eqref{eq:lin_plant} and the observer \eqref{eq:lin_obs}. Assumption \ref{ass:obs_error_i} holds. \hfill $\Box$
\end{prop}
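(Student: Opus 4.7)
The plan is to derive the linear error dynamics explicitly, then build a quadratic Lyapunov function for each observer using the Lyapunov equation associated with the Hurwitz matrix $A(p_i)+L(p_i)C(p_i)$, and finally obtain uniform-in-$i$ bounds by exploiting continuity of the involved matrix-valued functions on the compact set $\Theta$.

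First I would compute the error dynamics. Writing $\tilde x_i = \hat x_i - x$ and adding and subtracting $L(p_i)C(p_i)x$ in \eqref{eq:lin_obs} yields
\begin{equation}
\dot{\tilde x}_i = \bar A_i\,\tilde x_i + \Phi(p_i,p^\star)x + \Psi(p_i,p^\star)u,
\end{equation}
where $\bar A_i := A(p_i)+L(p_i)C(p_i)$ is Hurwitz by the design of $L(p_i)$, $\Phi(p_i,p^\star):=(A(p_i)-A(p^\star))+L(p_i)(C(p_i)-C(p^\star))$ and $\Psi(p_i,p^\star):=B(p_i)-B(p^\star)$. Crucially, both $\Phi(p^\star,p^\star)$ and $\Psi(p^\star,p^\star)$ vanish, which will be responsible for the property $\tilde\gamma(0,x,u)=0$.

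Next I would take $V_i(\tilde x_i) := \tilde x_i^{T}P_i\tilde x_i$ where $P_i>0$ is the unique solution of $\bar A_i^{T}P_i+P_i\bar A_i=-\mathbb{I}$. Choosing $L(\cdot)$ continuous on $\Theta$ (which can always be done), the map $p\mapsto P(p)$ is continuous on the compact set $\Theta$, so there exist constants $0<\underline{p}\leq\overline{p}$ with $\underline{p}\,\mathbb{I}\preceq P_i\preceq\overline{p}\,\mathbb{I}$ for every $p_i\in\Theta$. Using the equivalence of $|\cdot|$ and $|\cdot|_\infty$ on $\mathbb{R}^{n_x}$, this yields \eqref{eq:lyap_iss_1} with uniform constants $a_1,a_2>0$ independent of $i$.

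Then I would differentiate $V_i$ along \eqref{eq:error_sys_general}. The quadratic term gives $-|\tilde x_i|^{2}$, and Young's inequality on the cross term gives, for any $\varepsilon\in(0,1)$,
\begin{equation}
\dot V_i \leq -(1-\varepsilon)|\tilde x_i|^{2} + \tfrac{\overline{p}^{2}}{\varepsilon}\bigl|\Phi(p_i,p^\star)x+\Psi(p_i,p^\star)u\bigr|^{2}.
\end{equation}
By continuity of $A,B,C,L$ on the compact set $\Theta$, uniform continuity yields moduli $\omega_A,\omega_B,\omega_C\in\mathcal{K}$ (continuous with $\omega(0)=0$) such that $|A(p)-A(q)|\leq\omega_A(|p-q|_\infty)$ on $\Theta$, and similarly for $B,C$; together with $L_{\max}:=\max_{p\in\Theta}|L(p)|$ this gives
\begin{equation}
\bigl|\Phi(p_i,p^\star)x+\Psi(p_i,p^\star)u\bigr| \leq \bigl(\omega_A(|\tilde p_i|_\infty)+L_{\max}\omega_C(|\tilde p_i|_\infty)\bigr)|x|+\omega_B(|\tilde p_i|_\infty)|u|.
\end{equation}
Defining $\tilde\gamma(\tilde p_i,x,u)$ as the square of the right-hand side times $\overline{p}^{2}/\varepsilon$ produces a continuous, non-negative function vanishing at $\tilde p_i=0$, independent of $i$ and of $p^\star\in\Theta$. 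Combining with $|\tilde x_i|^{2}\geq V_i/\overline{p}$ yields \eqref{eq:lyap_iss_2} with the uniform rate $\lambda_0 := (1-\varepsilon)/\overline{p}$, completing the verification of Assumption \ref{ass:obs_error_i}.

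The only real obstacle is securing that $a_1,a_2,\lambda_0$ are uniform across $i\in\{1,\ldots,N\}$ and that $\tilde\gamma$ does not depend on the unknown $p^\star$; both hurdles are cleared by invoking continuity on the compact parameter set $\Theta$ together with a continuous choice of $L(p)$, which justifies the use of a single modulus of continuity and uniform bounds on $P_i$.
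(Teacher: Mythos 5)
Your proof is correct and follows essentially the same route as the paper's: a quadratic Lyapunov function $V_i(\tilde x_i)=\tilde x_i^{T}P_i\tilde x_i$ built from the Lyapunov equation for the Hurwitz matrix $A(p_i)+L(p_i)C(p_i)$, Young's inequality on the cross term, and compactness of $\Theta$ to produce a continuous $\tilde\gamma$ vanishing at $\tilde p_i=0$. The only cosmetic difference is in the bookkeeping: the paper removes the dependence on the unknown $p^{\star}$ by maximizing the perturbation term over $p\in\Theta$ and obtains $a_1,a_2$ by taking min/max over the finite index set, whereas you invoke moduli of uniform continuity of $A,B,C$ and a continuous selection of $L(\cdot)$ on $\Theta$ --- both are standard compactness arguments leading to the same conclusion.
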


If the classical PE condition \eqref{eq:tilde_y_PE_classic} is guaranteed, Assumption \ref{ass:PE_y_tilde} is satisfied according to Proposition \ref{prop:classic_PE} (note that item 2 of Proposition \ref{prop:classic_PE} holds for the considered system). There exist results in the literature (see Chapter 6 in \cite{narendra1989stable}) which provide sufficient conditions to verify \eqref{eq:tilde_y_PE_classic} for linear systems as mentioned earlier. These results can be used, for instance, to design an input $u$ to system \eqref{eq:lin_plant} such that the inequality \eqref{eq:tilde_y_PE_classic} is satisfied. The proposition below directly follows from Proposition \ref{prop:classic_PE} and Theorems \ref{thm:main_static}-\ref{thm:main_dynamic}.

\begin{prop} \label{prop:lin_res}
	Consider system \eqref{eq:lin_plant} and state-observer \eqref{eq:lin_obs} and suppose \eqref{eq:tilde_y_PE_classic} holds. When the static sampling policy described in Section \ref{sec:framework_static} (respectively the dynamic sampling policy of Section \ref{sec:framework_dynamic}) is used, then the conclusions of Theorem \ref{thm:main_static} (respectively Theorem \ref{thm:main_dynamic}) hold. \hfill $\Box$
\end{prop}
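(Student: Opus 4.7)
The plan is to show that Proposition \ref{prop:lin_res} is essentially a verification exercise: under the stated hypotheses, all three assumptions required by Theorems \ref{thm:main_static} and \ref{thm:main_dynamic} hold for the linear setup \eqref{eq:lin_plant}--\eqref{eq:lin_obs}, after which the conclusion follows immediately by invoking those theorems.

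First I would check Assumption \ref{ass:stable_sys}. Since $A(p^{\star})$ is assumed Hurwitz and $u \in \mathcal{M}_{\Delta_u}$, the linear variation-of-constants formula gives $|x(t)|_{\infty} \leq c_1 e^{-c_2 t}|x(0)|_{\infty} + c_3 \Delta_u$ for some positive constants depending on $A(p^{\star})$ and $B(p^{\star})$; this yields a uniform bound $K_x(\Delta_x,\Delta_u)$, so Assumption \ref{ass:stable_sys} holds.

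Next I would invoke Proposition \ref{prop:lin_obs_error}, which directly states that Assumption \ref{ass:obs_error_i} holds for the Luenberger observer \eqref{eq:lin_obs}. Then for Assumption \ref{ass:PE_y_tilde}, I would apply Proposition \ref{prop:classic_PE}. To do this I need to verify its three hypotheses: item 1 has just been established; item 2 is immediate because $f(x,p,u)=A(p)x+B(p)u$, $h(x,p)=C(p)x$ and $\hat{f}(\hat{x},p,u,y)=A(p)\hat{x}+B(p)u+L(p)(C(p)\hat{x}-y)$ are all continuously differentiable (indeed affine) in their arguments; item 3 is exactly the standing hypothesis \eqref{eq:tilde_y_PE_classic} of the proposition. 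Hence Proposition \ref{prop:classic_PE} applies and Assumption \ref{ass:PE_y_tilde} is satisfied.

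With all three assumptions in place, the hypotheses of Theorems \ref{thm:main_static} and \ref{thm:main_dynamic} are met verbatim, so the corresponding conclusions \eqref{eq:main_static_results} and \eqref{eq:main_dynamic_results} follow. There is no real obstacle here; the only mild subtlety is confirming that the detectability of $(A(p_{i}),C(p_{i}))$ together with the choice of $L(p_i)$ used in Proposition \ref{prop:lin_obs_error} produces Lyapunov functions $V_i$ with the \emph{uniform} constants $a_1,a_2,\lambda_0$ required by Assumption \ref{ass:obs_error_i}; this is guaranteed by the continuity of $A(p),B(p),C(p)$ (and a continuous selection of $L(p)$) on the compact set $\Theta$, which allows one to pick these constants uniformly over the finitely many sampled parameters, a fact implicitly used in the proof of Proposition \ref{prop:lin_obs_error}.
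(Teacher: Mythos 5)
Your proof is correct and follows essentially the same route as the paper, which simply observes that Assumption \ref{ass:stable_sys} holds because $A(p^{\star})$ is Hurwitz, that Assumption \ref{ass:obs_error_i} holds by Proposition \ref{prop:lin_obs_error}, and that Assumption \ref{ass:PE_y_tilde} follows from hypothesis \eqref{eq:tilde_y_PE_classic} via Proposition \ref{prop:classic_PE}, so that Theorems \ref{thm:main_static} and \ref{thm:main_dynamic} apply directly. Your closing remark on the uniformity of $a_1$, $a_2$, $\lambda_0$ is a fair point and is consistent with how the paper's proof of Proposition \ref{prop:lin_obs_error} selects these constants.
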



\subsection{A class of nonlinear systems} \label{sub:special_nonlinear}
We consider the following class of nonlinear systems studied in \cite{arcak2001observer}
\begin{eqnarray}
	\dot{x} & = & A(p^{\star}) x + G(p^{\star}) \gamma(Hx) + B(p^{\star}) \phi(u,y) \nonumber \\
	y & = & C(p^{\star})x, \label{eq:non_lin_plant}
\end{eqnarray}
where $x\in\mathbb{R}^{n_x}$, $u\in\mathbb{R}^{n_u}$, $y\in\mathbb{R}^{n_y}$, $p^{\star}\in\Theta \subset \mathbb{R}^{n_p}$, $\gamma:\mathbb{R}^{n_p}\rightarrow \mathbb{R}^{n_\gamma}$ and $\phi:\mathbb{R}^{n_u} \times \mathbb{R}^{n_y} \rightarrow \mathbb{R}^{n_\sigma}$.
The matrices $A(p)$, $G(p)$ and $B(p)$ are continuous in $p$ on $\Theta$.
We assume that Assumption \ref{ass:stable_sys} holds which can be verified with the aid of Lyapunov-based tools \cite{khalil1996nonlinear2}. We now explain how to design a state-observer which ensures the satisfaction of Assumption \ref{ass:obs_error_i}. For that purpose, we make the following assumption on the vector field $\gamma$, like in \cite{arcak2001observer,chong2012robust,zemouche2009unified}.
\begin{assum} \label{ass:nonlinear_functions}
	For any $k\in\{1,\dots,n_\gamma\}$, there exist constants $a_{\gamma_k}\in\mathbb{R}$, $b_{\gamma_k}\in\mathbb{R}\backslash\{0\}$ such that the following holds
	\begin{equation}
		-\infty < a_{\gamma_k} \leq \frac{\partial \gamma_{k}(v_{k})}{\partial v_{k}} \leq b_{\gamma_k} < \infty, \qquad \forall v_{k}\in \mathbb{R},
	\end{equation}
	where $\gamma=(\gamma_{1},\dots,\gamma_{n_\gamma})$. \hfill $\Box$
\end{assum}

\noindent The following state-observer \cite{arcak2001observer,chong2012robust,zemouche2009unified} is designed for any $p_i \in \Theta$, $i\in\{1,\dots,N\}$,
\begin{eqnarray}
	\dot{\hat{x}}_{i} & = & A(p_i) \hat{x}_{i} + G(p_i) \gamma(H\hat{x}_{i} + K(p_{i})(C\hat{x}_{i}-y)) + B(p_{i}) \phi(u,y) + L(p_{i})(C(p_i)\hat{x}_{i}-y) \nonumber \\
	\hat{y}_{i} & = & C(p_i) \hat{x}_{i}, \label{eq:non_lin_obs}
\end{eqnarray}
where $K(p_{i})$ and $L(p_{i})$ are the observer matrices. These matrices are selected such that the inequality (\ref{eq:lmi_cc}) below holds.

\begin{prop} \label{prop:cc_obs_error}
	Consider system \eqref{eq:non_lin_plant} and state-observer \eqref{eq:non_lin_obs} for $i\in\{1,\dots,N\}$. Suppose the following holds.
\begin{itemize}
\item[1)] Assumption \ref{ass:nonlinear_functions} holds.
\item[2)] There exist real matrices $P_{i}=P_{i}^{T}>0$, $M_{i}=\textrm{diag}(m_{i1},\dots,m_{in_{p}})>0$ and scalars $\nu_{i}$, $\mu_{i}>0$ such that the following holds
	\begin{equation}
\begin{array}{lllll}
		\left[\begin{array}{ccc}
			\mathcal{A}(P_{i},L(p_{i}),\nu_i) & \mathcal{B}(P_i,M_i,K(p_{i})) & P_i \\
			\star & \mathcal{E}(M_i) & 0  \\
			\star & \star & -\mu_{i}\mathbb{I}
		\end{array} \right] & \leq & 0, \label{eq:lmi_cc}
\end{array}	
\end{equation}
	where $\mathcal{A}(P_{i},L(p_{i}),\nu_i)=P_{i}\big(A(p_{i})+L(p_i)C(p_i)\big)+\big(A(p_{i})+L(p_i)C(p_i)\big)^{T}P_{i}+\nu_{i}\mathbb{I}$, \\ $\mathcal{B}(P_i,M_i,K(p_{i}))=P_{i}G(p_i)+\big(H+K(p_{i})C(p_i)\big)^{T} M_i$ and $\mathcal{E}(M_i)=-2M_i \textrm{diag}\left(\frac{1}{b_{\gamma_1}},\dots,\frac{1}{b_{\gamma_{n_p}}} \right)$.
\end{itemize}
Then Assumption \ref{ass:obs_error_i} is satisfied. \hfill $\Box$
\end{prop}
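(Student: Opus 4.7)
The plan is to use the quadratic Lyapunov function $V_i(\tilde{x}_i) := \tilde{x}_i^T P_i \tilde{x}_i$, where $P_i$ is the matrix from the LMI \eqref{eq:lmi_cc}. The two-sided bound \eqref{eq:lyap_iss_1} follows at once from $\lambda_{\min}(P_i)|\tilde{x}_i|^2 \le V_i(\tilde{x}_i) \le \lambda_{\max}(P_i)|\tilde{x}_i|^2$ combined with the norm equivalence $|\tilde{x}_i|_\infty^2 \le |\tilde{x}_i|^2 \le n_x |\tilde{x}_i|_\infty^2$, by taking $a_1 := \min_i \lambda_{\min}(P_i)$ and $a_2 := n_x \max_i \lambda_{\max}(P_i)$, both positive and finite since only $N$ observers are involved. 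All the work then lies in establishing the ISS-like inequality \eqref{eq:lyap_iss_2}.

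I would first write the error dynamics as
\begin{equation*}
\dot{\tilde{x}}_i = \bigl(A(p_i) + L(p_i)C(p_i)\bigr)\tilde{x}_i + G(p_i)\phi_\gamma + d(\tilde{p}_i,x,u),
\end{equation*}
where $\phi_\gamma := \gamma(\hat w) - \gamma(w)$ with $\hat w = H\hat{x}_i + K(p_i)(C(p_i)\hat{x}_i-y)$ and $w=Hx$, and $d(\tilde p_i,x,u)$ collects all terms produced by the mismatches $A(p_i)-A(p^\star)$, $B(p_i)-B(p^\star)$, $C(p_i)-C(p^\star)$, $G(p_i)-G(p^\star)$. By continuity of these matrix-valued functions and Assumption~\ref{ass:stable_sys}, $d$ is continuous and vanishes identically when $\tilde{p}_i = 0$. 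A direct computation gives $\hat w - w = (H + K(p_i)C(p_i))\tilde{x}_i + K(p_i)(C(p_i)-C(p^\star))x$, so the mismatch also enters the argument of $\gamma$.

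The core step is a circle-criterion S-procedure. For each index $k$, Assumption~\ref{ass:nonlinear_functions} and the mean value theorem yield $\phi_{\gamma,k} = \beta_k(\hat w_k - w_k)$ with $a_{\gamma_k}\le\beta_k\le b_{\gamma_k}$, whence $2 m_{ik}\phi_{\gamma,k}[(\hat w_k - w_k) - \tfrac{1}{b_{\gamma_k}}\phi_{\gamma,k}] \ge 0$ for any diagonal entry $m_{ik}>0$ of $M_i$. Summing over $k$ produces the non-negative quantity $\phi_\gamma^T \mathcal{E}(M_i)\phi_\gamma + 2\phi_\gamma^T M_i(\hat w - w)$, which I would add to $\dot V_i = 2\tilde{x}_i^T P_i \dot{\tilde{x}}_i$. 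Grouping the resulting quadratic terms in $\tilde{x}_i$ and $\phi_\gamma$ gives
\begin{equation*}
\dot V_i \le \begin{bmatrix}\tilde{x}_i\\\phi_\gamma\end{bmatrix}^T\!\!\begin{bmatrix}\mathcal{A}(P_i,L(p_i),0) & \mathcal{B}(P_i,M_i,K(p_i))\\\star & \mathcal{E}(M_i)\end{bmatrix}\!\begin{bmatrix}\tilde{x}_i\\\phi_\gamma\end{bmatrix} + 2\tilde{x}_i^T P_i d + 2\phi_\gamma^T M_i K(p_i)\bigl(C(p_i)-C(p^\star)\bigr)x.
\end{equation*}

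Young's inequality bounds $2\tilde{x}_i^T P_i d \le \tfrac{1}{\mu_i}\tilde{x}_i^T P_i^2 \tilde{x}_i + \mu_i |d|^2$, and a Schur complement of \eqref{eq:lmi_cc} with respect to its last block shows that \eqref{eq:lmi_cc} is equivalent to
\begin{equation*}
\begin{bmatrix}\mathcal{A}(P_i,L(p_i),\nu_i) + \tfrac{1}{\mu_i}P_i^2 & \mathcal{B}(P_i,M_i,K(p_i))\\\star & \mathcal{E}(M_i)\end{bmatrix} \le 0,
\end{equation*}
which delivers $\dot V_i \le -\nu_i|\tilde{x}_i|^2 + \mu_i|d|^2 + 2\phi_\gamma^T M_i K(p_i)(C(p_i)-C(p^\star))x$. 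The main obstacle is this last residual cross-term, which is not directly controlled by the LMI because it couples the nonlinearity increment to the parameter mismatch. I would handle it using the global Lipschitz bound $|\phi_\gamma| \le L_\gamma\bigl(|\tilde{x}_i| + |K(p_i)(C(p_i)-C(p^\star))x|\bigr)$ with $L_\gamma := \max_k \max\{|a_{\gamma_k}|,|b_{\gamma_k}|\}$ implied by Assumption~\ref{ass:nonlinear_functions}, followed by another Young's inequality with a small parameter $\epsilon>0$ so the induced $\epsilon|\tilde{x}_i|^2$ is absorbed into $\nu_i|\tilde{x}_i|^2$. Every remaining contribution is continuous in $(\tilde{p}_i,x,u)$ and vanishes at $\tilde{p}_i = 0$, so lumping all such contributions into a single function $\tilde\gamma(\tilde{p}_i,x,u)$ and setting $\lambda_0 := \min_i (\nu_i-\epsilon)/\lambda_{\max}(P_i) > 0$ yields \eqref{eq:lyap_iss_2} uniformly in $i$, completing the verification of Assumption~\ref{ass:obs_error_i}.
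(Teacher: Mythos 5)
Your proof is correct and follows essentially the same route as the paper: the quadratic Lyapunov function $V_i(\tilde{x}_i)=\tilde{x}_i^{T}P_i\tilde{x}_i$ with the same constants $a_1,a_2$, the mean-value/sector representation of $\gamma(\hat{w})-\gamma(w)$ combined with the diagonal multiplier $M_i$, and the LMI \eqref{eq:lmi_cc} — which the paper applies directly to the stacked vector $(\tilde{x}_i,\,\delta(t)(H+K(p_i)C)\tilde{x}_i,\,\bar{w})$, a step equivalent to your Schur complement of the last block followed by Young's inequality on $2\tilde{x}_i^{T}P_i d$. If anything you are more explicit than the paper, which defers to the proof of Theorem 2 in the cited reference and does not spell out the term $K(p_i)\big(C(p_i)-C(p^{\star})\big)x$ entering the argument of $\gamma$; your Lipschitz-plus-Young treatment of the resulting residual cross-term (absorbing the $\epsilon|\tilde{x}_i|^{2}$ contribution into $-\nu_i|\tilde{x}_i|^{2}$) is the right way to account for it.
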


Note that inequality \eqref{eq:lmi_cc} is considered a linear matrix inequality (LMI) in $P_i$, $P_i L(p_i)$, $M_i K(p_i)$, $M_i$ and $\mu_{i}$. Therefore, \eqref{eq:lmi_cc} can be solved using efficient software such as the LMI solvers in MATLAB.

\begin{rem} 
	Other than the circle criterion based observer \eqref{eq:non_lin_obs}, any exponential observer\footnote{An exponential observer has a corresponding state estimation error system whose equilibrium is exponentially stable when there is no parameter mismatch, i.e. $\tilde{p}_{i}=0$. } for which its convergence is ensured using a quadratic Lyapunov function also satisfy Assumption 3 provided that the right hand sides of \eqref{eq:observer_i} are continuous in the parameter $p$. 
\end{rem}

We assume that the PE condition stated in Assumption \ref{ass:PE_y_tilde} is satisfied. PE properties for nonlinear systems are studied in \cite{panteley2001relaxed} and may be used in conjunction with Proposition \ref{prop:classic_PE} to verify Assumption \ref{ass:PE_y_tilde}. The following result is a direct application of Proposition \ref{prop:classic_PE} and Theorems \ref{thm:main_static}-\ref{thm:main_dynamic}.
\begin{prop} \label{prop:non_lin_res}
	Consider system \eqref{eq:non_lin_plant}, state-observer \eqref{eq:non_lin_obs} and suppose the following holds.
	\begin{enumerate}
		\item Assumptions \ref{ass:stable_sys} and \ref{ass:nonlinear_functions} and (\ref{eq:tilde_y_PE_classic}) hold.
		\item Condition \eqref{eq:lmi_cc} is feasible.
	\end{enumerate}
When the static sampling policy described in Section \ref{sec:framework_static} (respectively the dynamic sampling policy of Section \ref{sec:framework_dynamic}) is employed, then the conclusions of Theorem \ref{thm:main_static} (respectively of Theorem \ref{thm:main_dynamic}) hold. \hfill $\Box$
\end{prop}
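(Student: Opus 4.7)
The plan is to verify that all hypotheses of Theorems \ref{thm:main_static} and \ref{thm:main_dynamic} are satisfied under the conditions of the proposition, and then directly invoke them. Concretely, the three required assumptions are Assumption \ref{ass:stable_sys} (uniform boundedness of the plant solutions), Assumption \ref{ass:obs_error_i} (quadratic ISS-Lyapunov bound for each observer error system with respect to the parameter mismatch), and Assumption \ref{ass:PE_y_tilde} (the parameter-dependent PE bound on the output errors). Since Assumption \ref{ass:stable_sys} is imposed directly in item 1, only the latter two require any work.

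For Assumption \ref{ass:obs_error_i}, I would simply invoke Proposition \ref{prop:cc_obs_error}: its two hypotheses are precisely (i) Assumption \ref{ass:nonlinear_functions} on $\gamma$, which is given by item 1, and (ii) feasibility of the LMI \eqref{eq:lmi_cc} for every $p_i\in\widehat{\Theta}\subset\Theta$, which is item 2. Hence for each $p_i$ there exist $P_i>0$, $M_i>0$, $\nu_i,\mu_i>0$ so that $V_i(\tilde{x}_i)=\tilde{x}_i^{T}P_i\tilde{x}_i$ satisfies the quadratic sandwich \eqref{eq:lyap_iss_1} and the dissipation inequality \eqref{eq:lyap_iss_2} with a continuous $\tilde{\gamma}$ vanishing at $\tilde{p}_i=0$, exactly as produced in the proof of Proposition \ref{prop:cc_obs_error}. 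One subtlety to check is the existence of \emph{uniform} constants $a_1,a_2,\lambda_0$ across the whole family $\{p_i\}_{i=1}^{N}$; this follows from the continuity of $A(p),G(p),B(p),C(p)$ in $p$ on the compact set $\Theta$, together with the observation that one may rescale each $V_i$ so that the bounds in \eqref{eq:lyap_iss_1} hold with common constants, and take $\lambda_0$ as the minimum of the decay rates. The dynamic scheme updates $\widehat{\Theta}(k)$ within $\Theta$, so these uniform constants continue to apply after every zoom.

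For Assumption \ref{ass:PE_y_tilde}, I would appeal to Proposition \ref{prop:classic_PE}. Its three hypotheses are: Assumption \ref{ass:stable_sys} (given), continuous differentiability of $f,h,\hat{f}$, and the classical PE condition \eqref{eq:tilde_y_PE_classic} (given). The smoothness requirement holds because $A,G,B,C$ are continuous in $p$, $\phi$ enters only through the measured signals $u,y$, and $\gamma$ is $C^{1}$ by Assumption \ref{ass:nonlinear_functions} (the partial derivative bound in fact implies at least Lipschitz continuity; where needed, standard mollification arguments or the use of the generalized gradient preserve the PE transfer). Thus \eqref{eq:tilde_y_PE} is guaranteed, which is exactly Assumption \ref{ass:PE_y_tilde}.

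With Assumptions \ref{ass:stable_sys}--\ref{ass:PE_y_tilde} all in place, the conclusion is immediate: the conditions of Theorem \ref{thm:main_static} are met, so for a sufficiently large $N$ the static sampling scheme gives the bounds \eqref{eq:main_static_results}, and likewise the conditions of Theorem \ref{thm:main_dynamic} are met, so the dynamic scheme gives the bounds \eqref{eq:main_dynamic_results}. The only non-mechanical step is ensuring uniformity of the Lyapunov constants over the (possibly growing/shifting) family of sampled parameters, and that is handled by the compactness of $\Theta$ and the continuity of the system matrices in $p$; this is the piece I would be most careful to spell out explicitly.
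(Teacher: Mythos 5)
Your proposal is correct and follows exactly the route the paper takes: the paper gives no separate proof of this proposition, stating only that it ``directly follows'' from Proposition \ref{prop:classic_PE} and Theorems \ref{thm:main_static}--\ref{thm:main_dynamic}, with Assumption \ref{ass:obs_error_i} supplied by Proposition \ref{prop:cc_obs_error} and Assumption \ref{ass:PE_y_tilde} by Proposition \ref{prop:classic_PE}, which is precisely your chain of implications. Your additional care about the uniformity of $a_1$, $a_2$, $\lambda_0$ over the sampled family is a legitimate point the paper leaves implicit, but it does not change the argument.
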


\section{Illustrative example: A neural mass model} \label{sec:example}
In this section, we apply the results of Section \ref{sub:special_nonlinear} to estimate the synaptic gains (parameters) and the mean membrane potentials (states) of neuronal populations  of the neural mass model in \cite{jansen1995eeg}. The model in \cite{jansen1995eeg} describes the dynamics of a single cortical column by capturing the interactions between the pyramidal neurons, the excitatory and the inhibitory interneurons in a localised region of the cortex. It has been shown to realistically reproduce various patterns seen in the EEG recordings which is the measured output, such as alpha rhythms. Moreover, it may be used to generate more complex phenomena as shown in \cite{wendling2005interictal}.   

To write the model in the form of \eqref{eq:non_lin_plant}, we take the states\footnote[4]{According to the notation of \cite{jansen1995eeg}, $x=(y_0,y_3,y_1,y_4,y_2,y_5)$ and the parameter vector is taken to be $p^{\star}=(A,B)$.} to be $x=(x_{01},x_{02},x_{11},x_{12},x_{21},x_{22}) \in \mathbb{R}^{6}$, where $x_{01}$, $x_{11}$ and $x_{21}$ are the the membrane potential contributions of the pyramidal neurons, the excitatory and inhibitory interneurons respectively and $x_{02}$, $x_{12}$ and $x_{22}$ are their respective time-derivatives. The unknown vector of parameters$^{4}$ is  $p^{\star}=(p_1^{\star},p_2^{\star})$, where $p_1^{\star}$ and $p_2^{\star}$ represent the synaptic gains of the excitatory and inhibitory neuronal populations respectively. The vector of parameters $p^{\star}$ belongs to $\Theta := [4,8] \times [22,28]$ in agreement with \cite{jansen1995eeg}. The matrices in \eqref{eq:non_lin_plant} are defined as $C = \left(\begin{array}{cccccc} 0 & 0 & 1 & 0 & -1 & 0 \end{array}\right)$, $A = \textrm{diag}(A_a,A_a,A_b)$, where $A_a = \left(\begin{array}{cc} 0 & 1 \\ -a^{2} & -2a \end{array}\right)$, $A_b = \left(\begin{array}{cc}0 & 1 \\ -b^{2} & -2b  \end{array}\right)$, $G(p^{\star}) = \left(\begin{array}{cc} 0 & 0 \\ 0 & 0 \\ 0 & 0 \\ p_1 a c_2 & 0 \\ 0 & 0 \\ 0 & p_2 b c_4 \end{array}\right)$, $B(p^{\star}) = \left(\begin{array}{cc} 0 & 0 \\ p_1 a & 0 \\ 0 & 0 \\ 0 & p_1 a \\ 0 & 0 \\ 0 & 0 \end{array}\right)$\\ and $H = \left(\begin{array}{cccccc} c_1 & 0 & 0 & 0 & 0 & 0 \\ c_3 & 0 & 0 & 0 & 0 & 0 \end{array}\right)$.
The parameters $a$, $b$, $c_1$, $c_2$, $c_3$ and $c_4\in\mathbb{R}_{>0}$ are assumed to be known. The nonlinear terms in \eqref{eq:non_lin_plant} are $\gamma = (S,S)$ and $\phi(u,y)=(S(y),u)$. The function $S$ denotes the sigmoid function $S(v):=\frac{2e_0}{1+e^{r(v_0-v)}}$ for $v\in\mathbb{R}$, with known constants $e_0$, $v_0$, $r\in\mathbb{R}_{>0}$. For a detailed description of the model and its parameters, see \cite{jansen1995eeg}. The model has uniformly bounded solutions for all initial conditions and bounded input $u\in \mathcal{M}_{\Delta_u}$, because the matrix $A$ is Hurwitz and the nonlinearity $S$ in $\gamma$ and $\phi$ is bounded. Therefore, the model satisfies Assumption \ref{ass:stable_sys}. Furthermore, by the definition of the nonlinearity $S$ above, Assumption \ref{ass:nonlinear_functions} is satisfied.

We perform simulations with the model initialised at $x(0)=10 \times 1_{6\times 1}$, $p^{\star}=(p^{\star}_1,p^{\star}_2)=(6.5,25.5)$. Since the model can be written in the form of \eqref{eq:non_lin_plant}, we design the state-observers \eqref{eq:observer_i} of the form \eqref{eq:non_lin_obs}. The gains $K(p_i)$ and $L(p_i)$ are obtained by solving \eqref{eq:lmi_cc}. We implement the supervisory observer with both the static and dynamic sampling policies of Sections \ref{sec:framework_static} and \ref{sec:framework_dynamic}. The following design parameters are chosen: $\lambda = 0.005$ (for the monitoring signal in Section \ref{sec:monitoring_signal}), the zooming factor $\alpha = 0.8$ and the sampling interval $T_d=10$s (for the dynamic sampling policy in Section \ref{sec:framework_dynamic}). 

In Figures \ref{fig:parameter_estimates}-\ref{fig:res}, we compare the performance of the supervisory observer with and without dynamic sampling of parameters for $N=N_A \times N_B = 5\times 5$ observers, where $N_A$ and $N_B$ are the number of samples taken in the set of possible $p_1^{\star}$ and $p_2^{\star}$ values, respectively. Both policies allow us to estimate the states and the parameter up to some margin of the true values. The dynamic policy also gives better results in view of Figure \ref{fig:res}. To illustrate the workings of the dynamic sampling policy, we provide snapshots of the sampled parameters for $t\in[0,150)$s at each update time $t_k$ for $k\in\{0,1,5,9,10,14\}$ in Figure \ref{fig:55_10s_sample_1}. Note that for $t\geq 60$s, the plant parameter is no longer in the zoomed parameter set $\Theta(6)$, a phenomenon that is expected (see Section V). Nevertheless, the parameter estimate has converged to a desired neighbourhood of the plant parameter $p^{\star}$ for $t\geq 60$s.  

\begin{figure}[h!]
	\begin{center}
		\subfigure[Dynamic sampling with sampling interval $t_{k+1}-t_{k}=10$s.]{\includegraphics[width=8.5cm]{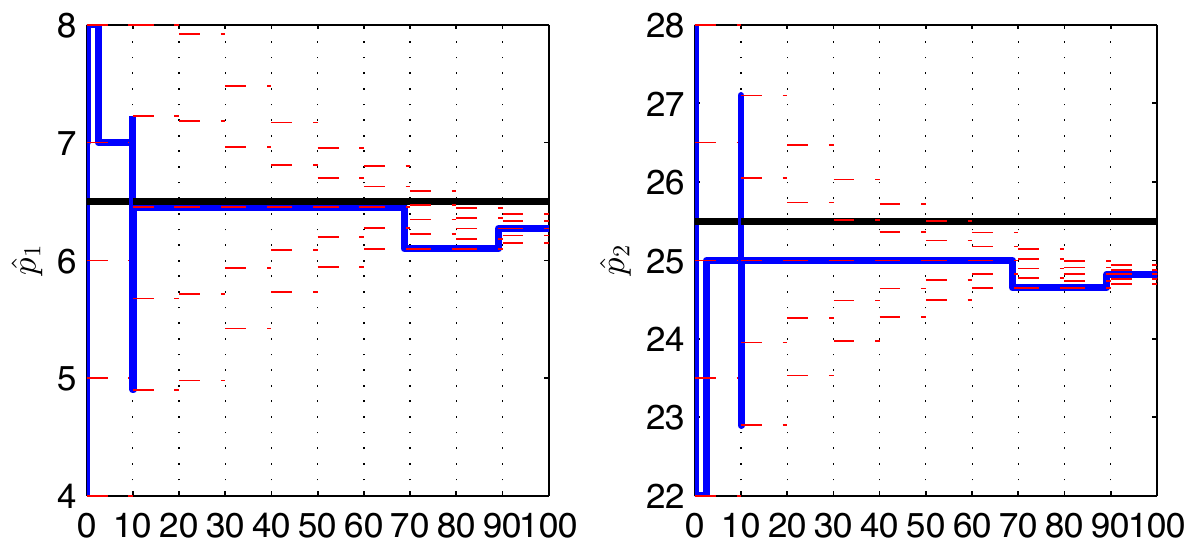}} 
		\subfigure[Static sampling]{\includegraphics[width=8.5cm]{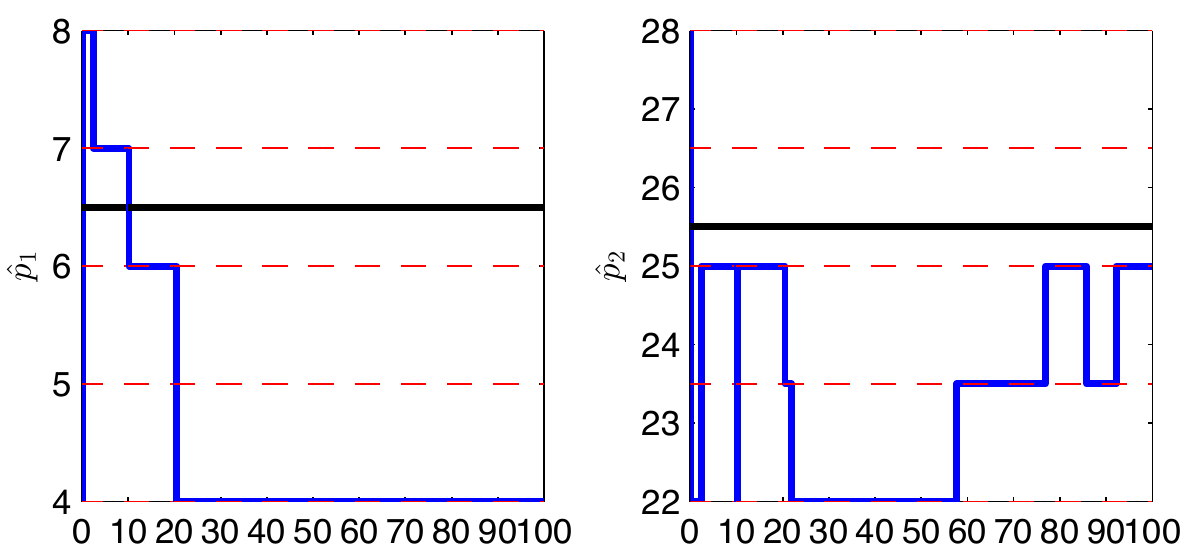}}
	\end{center} \vspace{-1em}
	\caption{ Estimated parameter $\hat{p}$ and the sampled parameter values. Legend: red dashed lines denote the chosen parameter values $p_i$, $i\in\{1,\dots,N\}$ and the vertical, dotted lines indicate the time of updating the parameter samples, $t_k$, $k\in\mathbb{N}$. The black solid line indicates the system parameter values $p^{\star}$. \label{fig:parameter_estimates} } 
\end{figure}

\begin{figure}[h!]
	\begin{center}
		\includegraphics[width=8.5cm]{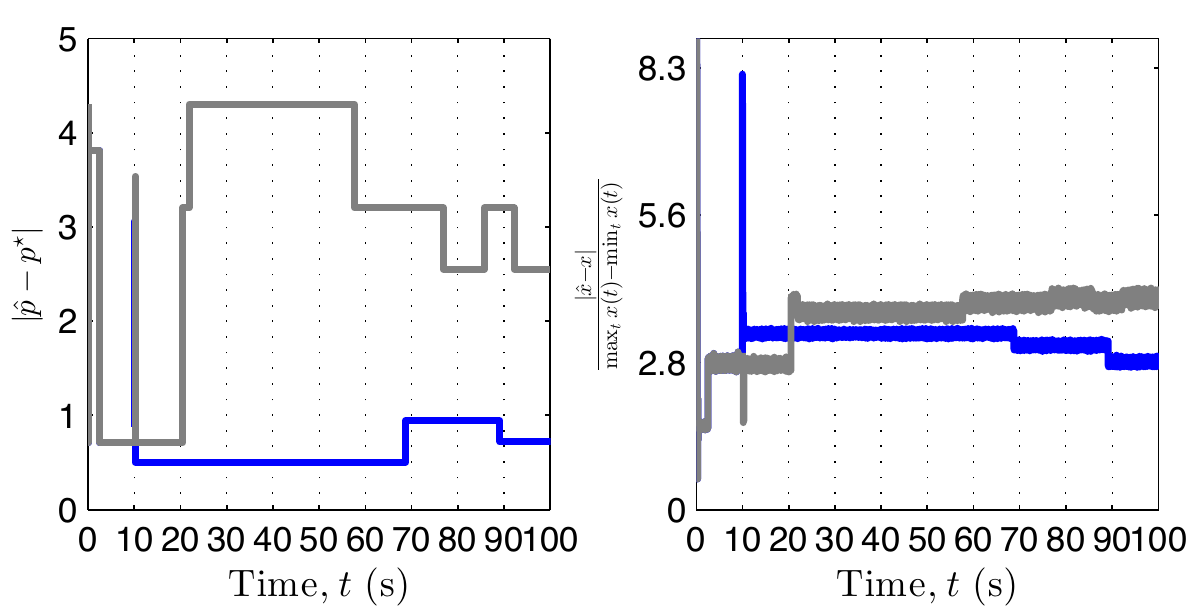}
	\end{center} \vspace{-1em}
	\caption{Norms of the parameter and state estimation errors obtained using the dynamic sampling policy with sampling interval $T_d = 10s$ (blue line) and the static sampling policy (grey line). \label{fig:res} } 
\end{figure}

\begin{figure}[h!]
	\begin{center}
		\includegraphics[width=4cm]{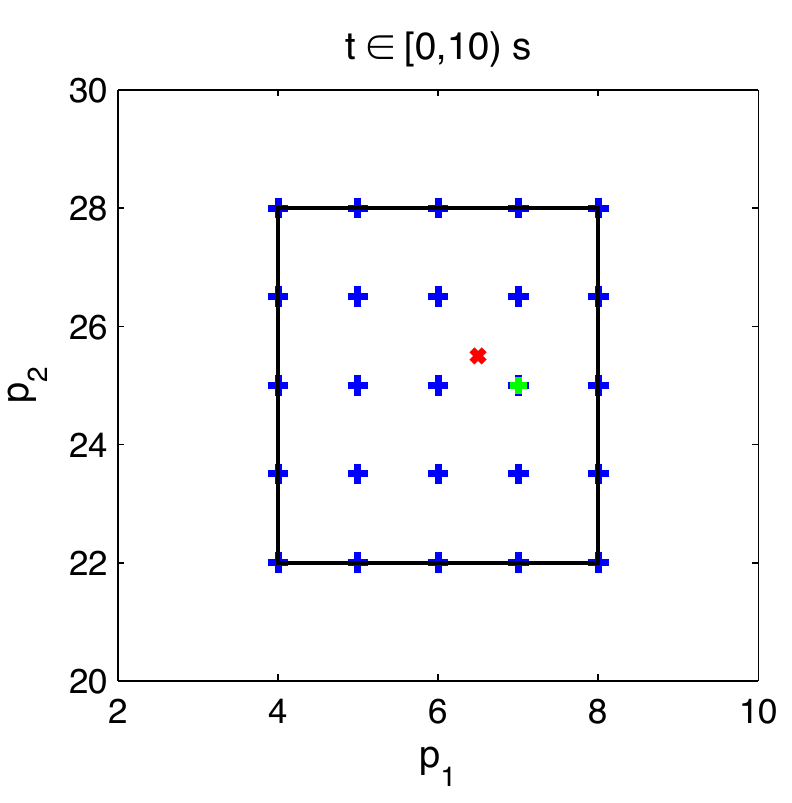}
		\includegraphics[width=4cm]{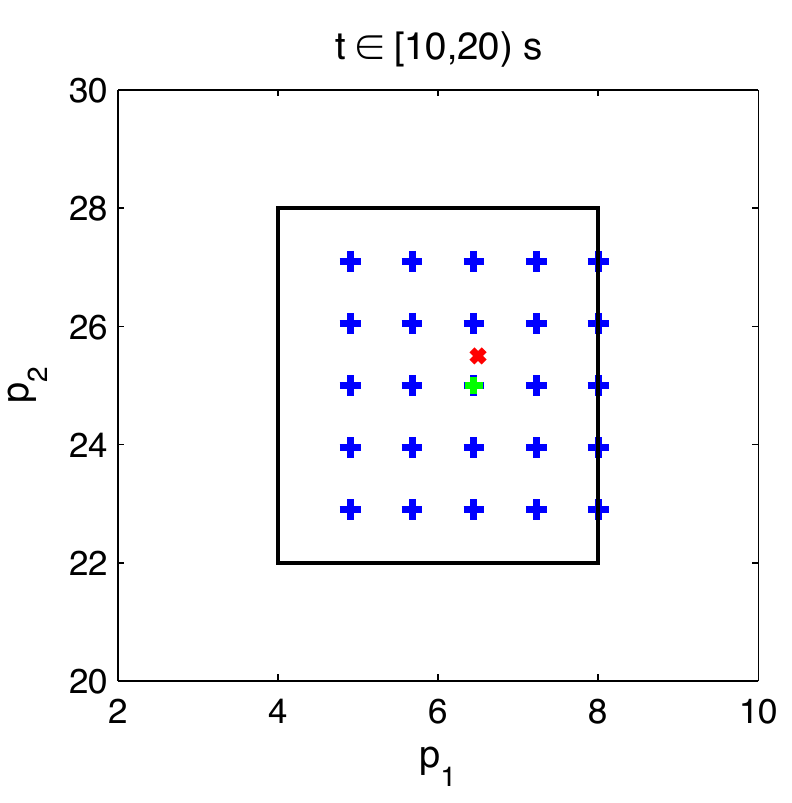}
		\includegraphics[width=4cm]{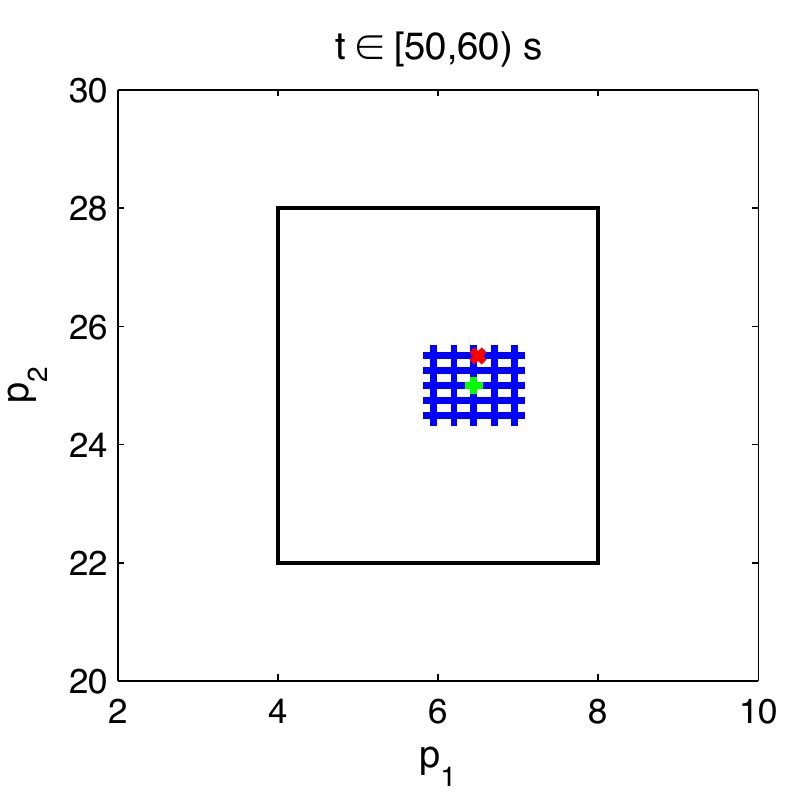}
		\includegraphics[width=4cm]{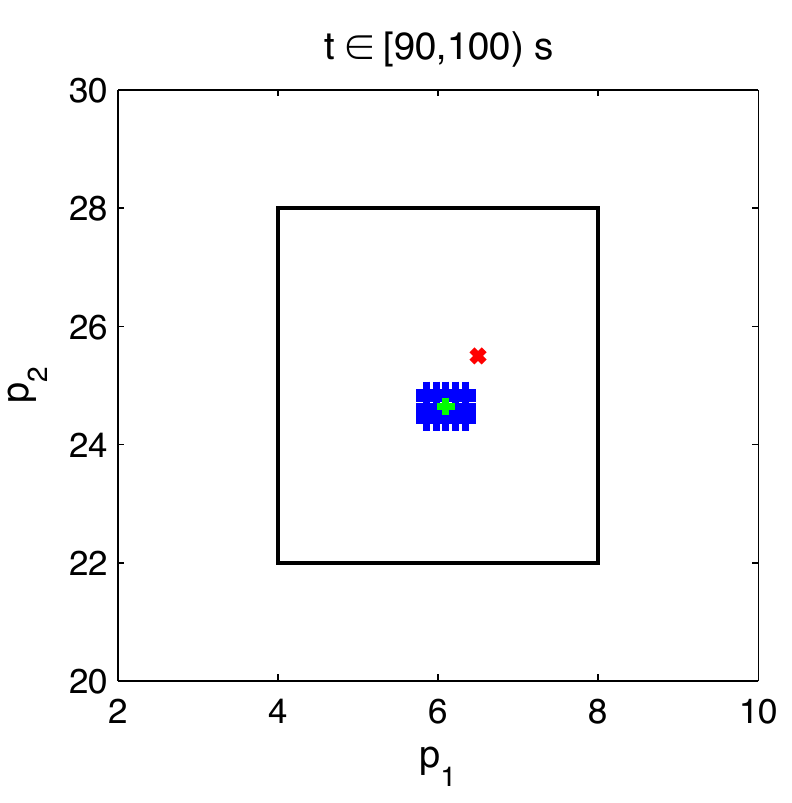}
		\includegraphics[width=4cm]{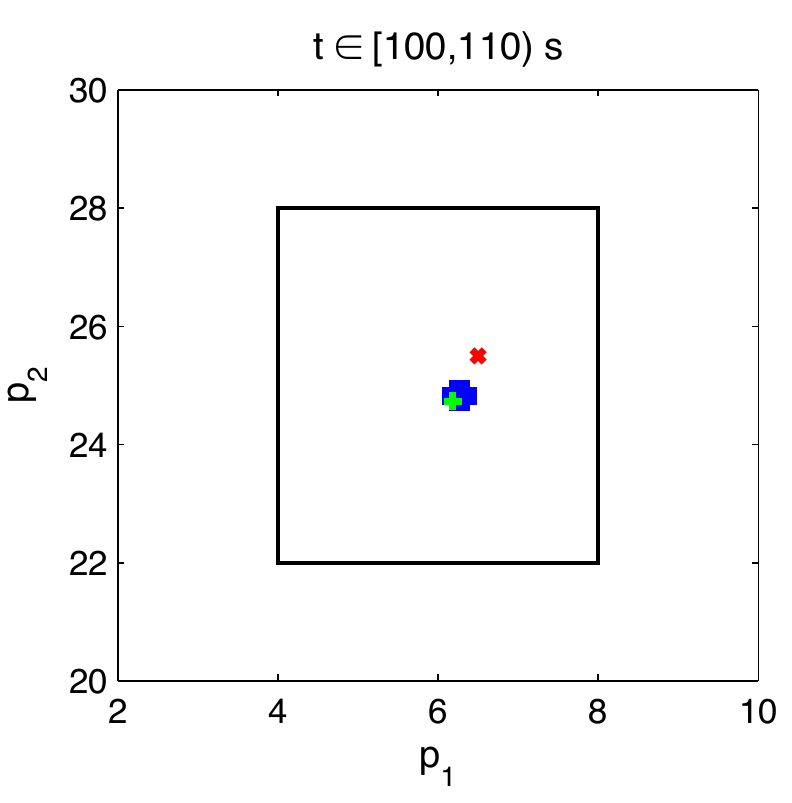}
		\includegraphics[width=4cm]{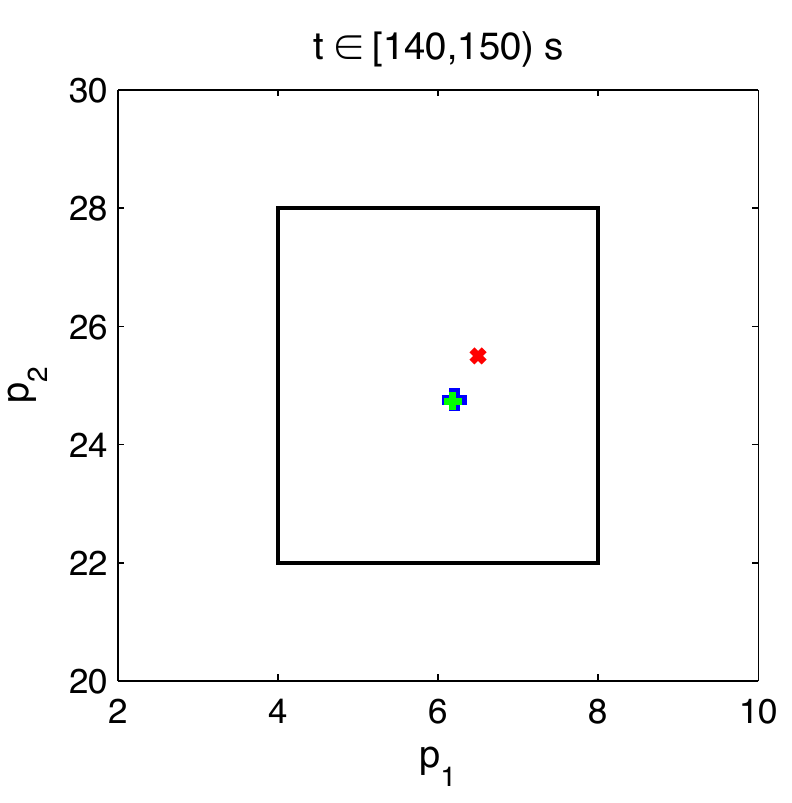}
	\end{center} \vspace{-2em}
	\caption{The dynamic sampling of the parameter set at some update instant $t_k$ for $(N_A,N_B)=(5,5)$ with sampling interval $t_{k+1}-t_{k}=10$s. Legend: {\color{red}$\times$} is the true parameter $p^{\star}$; {\color{blue}$+$} are the sampled parameters;  {\color{green} $+$} is the final selected parameter in the interval $t\in[t_k,t_{k+1})$; the known parameter set $\Theta$ is drawn with solid black lines.  \label{fig:55_10s_sample_1}  }
\end{figure}

To investigate the impact of the number of state-observers on the convergence results, we consider three values for $N=N_A \times N_B$: $2\times2$, $4\times 4$ and $5 \times 5$.  Table \ref{tab:static_dynamic_N} summarises the results obtained. We see that the supervisory observer with the dynamic policy outperforms the static scheme, in terms of the accuracy of the parameter estimate. This does not necessarily translate to a smaller ultimate bound on the state estimation error as seen in the results obtained for the static sampling policy. This can be explained as follows. Firstly, the results of Sections \ref{sec:framework_static} and \ref{sec:framework_dynamic} hold for a sufficiently large number of observers $N$ and it may be the case that $N=4$ and $N=16$ are not large enough to satisfy the conditions of Theorems \ref{thm:main_static} and \ref{thm:main_dynamic}. Secondly, although each observer satisfies Assumption \ref{ass:obs_error_i}, these state-observers do not have the same quantitative robustness properties with respect to the parameter estimation error $\tilde{p}_{i}$. To be precise, $\bar{\gamma}_{\tilde{x}}$ of the individual state estimation error systems \eqref{eq:error_sys_general} are in general different. Thus, the decrease of $|\tilde{p}_{i}|$ may be compensated by larger $\bar{\gamma}_{\tilde{x}}$. It will therefore be interesting to develop observers which minimise $\bar{\gamma}_{\tilde{x}}$ in future work.

\begin{table}[h!]
	\begin{center}
		\begin{tabular}{lcccc}
			\toprule
			$N=N_A \times N_B$ & $2\times2$ & $4\times4$ & $5\times5$  \\
			\midrule
			Static policy: $ \left| \tilde{p}_{\sigma(t_f)}(t_f) \right|$ & $4.30$ & $3.80$ & $2.55$  \\
			Dynamic policy: $ \left| \tilde{p}_{\sigma(t_f)}(t_f) \right|$ & $2.66$ & $1.04$ & $0.72$  \\
			Static policy: $ \frac{\left| \tilde{x}_{\sigma(t_f)}(t_f) \right|}{\max_{t} |x(t)| - \min_{t}|x(t)|}$ & $4.01$ & $1.73$ & $4.64$  \\
			Dynamic policy: $\frac{\left| \tilde{x}_{\sigma(t_f)}(t_f) \right|}{\max_{t\in[0,t_f]} |x(t)| - \min_{t\in[0,t_f]}|x(t)|}$ & $4.26$ & $3.47$ & $3.22$  \\
			\bottomrule
			The final time of simulation $t_f$ is $100$s.
		\end{tabular}
		\caption{Numerical results for increasing number of observers $N$ such that $d(p^{\star},\widehat{\Theta})$ decreases in the static case. \label{tab:static_dynamic_N}} 		
	\end{center}
	
\end{table}

\section{Conclusions} \label{sec:conclusion}
The main contribution of this paper is the design of schemes for the parameter and state estimation of nonlinear continuous-time systems with convergence guarantees. The proposed approach builds upon recent advances on supervisory control. It allows us to treat nonlinear systems provided robust state observers with respect to parameter errors can be designed and a given PE condition holds. We have shown that the parameter estimates converge in finite-time to the true parameter with any desired accuracy and that the norm of the state estimation error converges to the origin up to an adjustable margin, by taking a sufficiently large number of state-observers. This scheme may be computationally intense in some cases. This motivated the introduction of a dynamic sampling policy of the parameter set which may be used to ensure the same convergence guarantees with less number of observers. We have shown how these results can be applied to linear systems using Luenberger observers and to a class of nonlinear systems using circle criterion based observers. To illustrate the applicability of the supervisory observer, the schemes are applied to a neural mass model to estimate the synaptic gains and the mean membrane potentials of a cortical column. 

This work can be extended along two important directions. Firstly, the inclusion of measurement noises and model uncertainties. We expect the proposed supervisory observers to be well-equipped to handle this scenario provided the state-observers of the multi-observer unit are robust in an appropriate sense. Secondly, we mentioned in Section \ref{sec:framework_static} that prior information on the localization of the parameters within the parameter set in terms of probability distributions may be used to heuristically select the parameters value in the known parameter set. It would be interesting to assume the existence of such a distribution and to revisit the results of this paper to obtain (stochastic) convergence guarantees which would be potentially stronger than those currently ensured by the available nonlinear Kalman filtering techniques.


\appendix
\subsection{Proof of Proposition \ref{prop:classic_PE}}
For any $i\in\{1,\ldots,N\}$, $p_i\in\Theta$ and $\Theta$ is compact, hence $p_i - p^{\star}$ belongs to some compact set $\widetilde{\Theta}$. Let $p_{i}\in\Theta$ with $i\in\{1,\ldots,N\}$ such that $\tilde{p}_{i}\neq 0$, $\Delta_{\tilde{x}},\Delta_{x},\Delta_{u}>0$, $\tilde{x}_{i}(0)\in\mathcal{H}(0,\Delta_{\tilde{x}})$,
$x(0)\in\mathcal{H}(0,\Delta_{x})$ and $u\in\mathcal{M}_{\Delta_{u}}$. We denote the solution to the state error system \eqref{eq:error_sys_general} at time $t\geq 0$ as $\xi(t,\tilde{p}_i)$, where we have omitted its dependence on $x$, $u$ and $\tilde{x}_{i}(0)$. Similarly, we denote the output to \eqref{eq:error_sys_general} at time $t\geq 0$ as $\eta(t,\tilde{p}_{i})$.

We note that \eqref{eq:tilde_y_PE_classic} can be expressed in a scalar form as follows for all $t\geq T_f$,
\begin{equation}
\int_{t-T_f}^{t} (\nu^{T}\eta(s,\tilde{p}_{i}))^{2} ds \geq \bar{\alpha}_{i}, \label{eq:PE_classic_scalar}
\end{equation}
where $\nu \in \mathbb{R}^{n_y}$ is any constant vector with $|\nu|=1$. Since $|\nu^{T}\eta(s,\tilde{p}_{i})| \leq |\nu| |\eta(s,\tilde{p}_{i})|=|\eta(s,\tilde{p}_{i})|$ and $|\eta(s,\tilde{p}_{i})| \leq \sqrt{n_{y}}|\eta(s,\tilde{p}_{i})|_{\infty}$, inequality \eqref{eq:PE_classic_scalar} implies that
	\begin{equation}
		n_{y}\int_{t-T_f}^{t} |\eta(s,\tilde{p}_{i})|_{\infty}^{2} ds \geq \bar{\alpha}_{i}. \label{eq:PE_norm}
	\end{equation}
Let $W:(t,\tilde{p}_{i})\mapsto n_{y}\int_{t-T_f}^{t} |\eta(s,\tilde{p}_{i})|_{\infty}^{2} ds$ for $t\geq T_f$ and $\tilde{p}_{i} \in \widetilde{\Theta}$. We show that $W$ is continuous in $t$ and $\tilde{p}_{i}$. Since $f$ and $\hat{f}$ are continuously differentiable, $\tilde{p}_{i} \in \widetilde{\Theta}$, $p^{\star}\in\Theta$, $u \in \mathcal{M}_{\Delta_u}$ and $x \in \mathcal{M}_{K_x}$ (by Assumption \ref{ass:stable_sys}), $F_{i}$ is locally Lipschitz in $\xi$, uniformly in $\tilde{p}_{i}$, $p^{\star}$, $u$ and $x$ using similar arguments as in the proof of Lemma 2.3 of \cite{khalil1996nonlinear2}. Hence, $\xi(t,\tilde{p}_{i})$ is continuous in $t$ and $\tilde{p}_{i}$ by Theorem 2.6 in \cite{khalil1996nonlinear2}. As a consequence, we deduce that $\eta(t,\tilde{p}_{i})$ is also continuous in $t$ and $\tilde{p}_{i}$ by using the fact that $H$ is continuous in view of item 2) of Proposition \ref{prop:classic_PE}. Therefore, we have that $w:(s,\tilde{p}_{i})\mapsto n_{y}|\eta(s,\tilde{p}_{i})|^{2}_{\infty}$ is continuous in $s$ and $\tilde{p}_{i}$. As a consequence, $w$ is  uniformly continuous on $[t-T_f,t]\times\widetilde{\Theta}$ with $t\geq T_f$. Thus, given $\epsilon > 0$, there exists $\delta=\delta(\epsilon) >0$ such that for every pair of points $z=(s,\tilde{p}_{i})$ and $z'=(s',\tilde{p}_{i}')$ such that $|z-z'|<\delta$,  $|w(s,\tilde{p}_{i})-w(s',\tilde{p}_{i}')|<\epsilon$. Let $\epsilon>0$ and we fix the corresponding constant $\delta>0$. If $|\tilde{p}_{i}-\tilde{p}_{i}'| < \delta$, $\left| \int_{t-T_f}^t w(s,\tilde{p}_{i}) ds - \int_{t-T_f}^{t} w(s,\tilde{p}_{i}) ds \right| \leq \int_{t-T_f}^{t} \left| w(s,\tilde{p}_{i}) - w(s,\tilde{p}_{i}') \right| ds \leq T_f \epsilon$. Therefore, $W(t,\cdot)$ is continuous on $\widetilde{\Theta}$ for any $t\geq T_{f}$. On the other hand, the continuity of $W(\cdot,\tilde{p}_{i})$ for any $\tilde{p}_{i}\in\widetilde{\Theta}$ follows from the continuity of $w(\cdot,\tilde{p}_{i})$.

	For any $t\geq T_{f}$ and any $\tilde{p}_{i}\in\widetilde{\Theta}\backslash\{0\}$, $W(t,\tilde{p}_{i}) \geq \bar{\alpha}_{i}>0$ in view of (\ref{eq:PE_norm}). Using the fact that $W$ is continuous, we deduce that there exists a continuous and positive definite function $\widetilde{W}:\widetilde{\Theta}\to\mathbb{R}_{\geq 0}$ such that $\widetilde{W}(\tilde{p}_{i})\leq W(t,\tilde{p}_{i})$ for any $t\geq T_f$ and $\tilde{p}_{i}\in\widetilde{\Theta}$. By applying Lemma 3.5 in \cite{khalil1996nonlinear2} , we derive that there exists $\tilde{\alpha}_{\tilde{y}}\in\mathcal{K}_{\infty}$ such that $\widetilde{W}(\tilde{p}_{i})\geq \tilde{\alpha}_{\tilde{y}}(|\tilde{p}_{i}|)$ for any $\tilde{p}_{i}\in\widetilde{\Theta}$. Hence (\ref{eq:tilde_y_PE}) holds with $\alpha_{\tilde{y}}(s)=n_{y}^{-1}\tilde{\alpha}_{\tilde{y}}(s)$ (where we use the fact that $|\tilde{p}_i|\geq |\tilde{p}_i|_{\infty}$ for any $\tilde{p}_{i}$). \hfill $\Box$

	\subsection{Proof of Theorem \ref{thm:main_static}}
	We first prove that the state error systems \eqref{eq:error_sys_general} and the monitoring signals \eqref{eq:monitoring_signal} have desirable properties. Namely, the state error system \eqref{eq:error_sys_general} is shown to satisfy a local ISS property with respect to the parameter error $\tilde{p}_{i}$, $i\in\{1,\ldots,N\}$, in Lemma \ref{lem:bound_tilde_x}. This property, along with Assumption \ref{ass:PE_y_tilde}, are the key ingredients that allow us to show that the monitoring signals are lower and upper bounded by functions of the parameter error $\tilde{p}_{i}$, $i\in\{1,\ldots,N\}$, that are strictly increasing with $|\tilde{p}_{i}|$ in Lemma \ref{lem:bound_mu}. These lemmas are used to conclude the results of Theorem \ref{thm:main_static}.

	\begin{lem} \label{lem:bound_tilde_x}
		Consider  system \eqref{eq:general_plant} and the state error system \eqref{eq:observer_i} under Assumptions \ref{ass:stable_sys}-\ref{ass:obs_error_i}. There exist constants $\bar{k}$, $\bar{\lambda}>0$ such that for any $\Delta_{\tilde{x}},\Delta_{x},\Delta_{u}>0$ there exists a class $\mathcal{K}_{\infty}$ function $\bar{\gamma}_{\tilde{x}}$ such that for any $p_i\in\Theta$, $i\in\{1,\dots,N\}$, $\tilde{x}_{i}(0)\in  \mathcal{H}(0,{\Delta_{\tilde{x}}})$, $x(0) \in \mathcal{H}(0,\Delta_{x})$, $u\in \mathcal{M}_{\Delta_u}$, the corresponding solution satisfies
		\begin{equation}
			|\tilde{x}_{i}(t)|_{\infty} \leq \bar{k}\exp(-\bar{\lambda} t)|\tilde{x}_{i}(0)|_{\infty} + \bar{\gamma}_{\tilde{x}}(|\tilde{p}_{i}|_{\infty}), \qquad \forall t\geq 0. \label{eq:xi_iss}
		\end{equation} \hfill $\Box$
	\end{lem}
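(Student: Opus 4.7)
The plan is to combine the Lyapunov-based robustness property of each observer from Assumption \ref{ass:obs_error_i} with the boundedness of the plant trajectories from Assumption \ref{ass:stable_sys}, via a standard comparison-lemma argument, and then to dominate the perturbation term by a class-$\mathcal{K}_\infty$ function of $|\tilde{p}_{i}|_{\infty}$ alone.

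First I would use Assumption \ref{ass:stable_sys} to obtain $|x(t)|_{\infty}\leq K_{x}(\Delta_{x},\Delta_{u})$ for all $t\geq 0$ whenever $x(0)\in\mathcal{H}(0,\Delta_{x})$ and $u\in\mathcal{M}_{\Delta_{u}}$. Since $u(t)\in\mathcal{H}(0,\Delta_{u})$ and the parameter error $\tilde{p}_{i}$ lives in the compact set $\widetilde{\Theta}:=\Theta-\Theta$ (compactness of $\Theta$ carries over), the triple $(\tilde{p}_{i},x(t),u(t))$ evolves in the compact set $\mathcal{K}_{*}:=\widetilde{\Theta}\times\mathcal{H}(0,K_{x})\times\mathcal{H}(0,\Delta_{u})$ along any admissible solution.

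Next, since $\tilde{\gamma}$ is continuous on $\mathcal{K}_{*}$ with $\tilde{\gamma}(0,x,u)=0$ for all $(x,u)$, a standard construction produces a class-$\mathcal{K}_{\infty}$ function $\kappa$ such that
\begin{equation*}
\tilde{\gamma}(\tilde{p}_{i},x,u)\leq \kappa(|\tilde{p}_{i}|_{\infty})\qquad \text{for all }(\tilde{p}_{i},x,u)\in\mathcal{K}_{*}.
\end{equation*}
Concretely, one sets $\rho(s):=\max\{\tilde{\gamma}(q,x,u)\,:\,|q|_{\infty}\leq s,\,(x,u)\in\mathcal{H}(0,K_{x})\times\mathcal{H}(0,\Delta_{u})\}$, which is well-defined and continuous by compactness, non-decreasing, and $\rho(0)=0$; then one majorizes $\rho$ by a strictly increasing, continuous, unbounded function $\kappa$. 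Substituting this bound into \eqref{eq:lyap_iss_2} gives $\dot{V}_{i}\leq -\lambda_{0}V_{i}+\kappa(|\tilde{p}_{i}|_{\infty})$ along any solution with $(x(0),\tilde{x}_{i}(0),u)$ in the prescribed sets.

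The comparison lemma then yields
\begin{equation*}
V_{i}(\tilde{x}_{i}(t))\leq e^{-\lambda_{0}t}V_{i}(\tilde{x}_{i}(0))+\frac{1}{\lambda_{0}}\kappa(|\tilde{p}_{i}|_{\infty}),\qquad \forall t\geq 0,
\end{equation*}
and applying the quadratic sandwich \eqref{eq:lyap_iss_1} together with $\sqrt{a+b}\leq\sqrt{a}+\sqrt{b}$ produces
\begin{equation*}
|\tilde{x}_{i}(t)|_{\infty}\leq \sqrt{\tfrac{a_{2}}{a_{1}}}\,e^{-\lambda_{0}t/2}|\tilde{x}_{i}(0)|_{\infty}+\sqrt{\tfrac{\kappa(|\tilde{p}_{i}|_{\infty})}{a_{1}\lambda_{0}}},
\end{equation*}
which is \eqref{eq:xi_iss} with $\bar{k}:=\sqrt{a_{2}/a_{1}}$, $\bar{\lambda}:=\lambda_{0}/2$ (both independent of $\Delta_{\tilde{x}},\Delta_{x},\Delta_{u}$ as promised), and $\bar{\gamma}_{\tilde{x}}(s):=\sqrt{\kappa(s)/(a_{1}\lambda_{0})}\in\mathcal{K}_{\infty}$.

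The only non-routine step is the construction of the $\mathcal{K}_{\infty}$ majorant $\kappa$: it is essential that $\tilde{\gamma}$ be evaluated only on a compact set so that $\rho$ is well-defined and that $\tilde{\gamma}(0,\cdot,\cdot)\equiv 0$ so that $\rho(0)=0$. Both are guaranteed, respectively, by Assumption \ref{ass:stable_sys} (plus compactness of $\Theta$) and by the hypothesis on $\tilde{\gamma}$ in Assumption \ref{ass:obs_error_i}. Everything else is a direct use of the comparison lemma and the quadratic bounds on $V_{i}$.
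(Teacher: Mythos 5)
Your proposal is correct and follows essentially the same route as the paper: bound the perturbation $\tilde{\gamma}(\tilde{p}_{i},x,u)$ on the compact set supplied by Assumption \ref{ass:stable_sys} by a class-$\mathcal{K}_{\infty}$ function of $|\tilde{p}_{i}|_{\infty}$, then apply the comparison lemma to \eqref{eq:lyap_iss_2} and the quadratic sandwich \eqref{eq:lyap_iss_1}, arriving at the same constants $\bar{k}=\sqrt{a_{2}/a_{1}}$, $\bar{\lambda}=\lambda_{0}/2$ and the same form of $\bar{\gamma}_{\tilde{x}}$. The only cosmetic difference is that you construct the $\mathcal{K}_{\infty}$ majorant explicitly via the non-decreasing envelope $\rho$, whereas the paper invokes Lemma 3.5 of \cite{khalil1996nonlinear2} for the same purpose.
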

	\begin{proof}[Proof of Lemma \ref{lem:bound_tilde_x}]
		Let $p_i\in\Theta$, where $i\in\{1,\dots,N\}$, $V_i:\mathbb{R}^{n_x}\to \mathbb{R}_{\geq 0}$, $\tilde{\gamma}:\mathbb{R}^{n_p}\times \mathbb{R}^{n_x} \times \mathbb{R}^{n_u}\to \mathbb{R}_{\geq 0}$,  $a_1$, $a_2$, $\lambda_{0}>0$  be generated by Assumption \ref{ass:obs_error_i}. Given $\Delta_{\tilde{x}}$, $\Delta_{x}$, $\Delta_u>0$, let $u\in \mathcal{M}_{\Delta_u}$ and $K_{x}>0$ be generated by Assumption \ref{ass:stable_sys}. Since $\tilde{\gamma}$ is a continuous function and $\tilde{\gamma}(0,x,u)=0$, for all $x\in\mathbb{R}^{n_x}$, $u\in\mathbb{R}^{n_u}$, $\tilde{\gamma}$ can always be upper bounded by a positive definite function $\tilde{\gamma}(\tilde{p}_{i})=\max_{|x|_{\infty}\leq K_{x}, |u|_{\infty}\leq \Delta_u},\tilde{\gamma}(\tilde{p}_{i},x,u)$. By Lemma 3.5 in \cite{khalil1996nonlinear2} and since the Euclidean and infinity norms are equivalent in $\mathbb{R}^{n_p}$, there exists a class $\mathcal{K}_{\infty}$ function $\bar{\gamma}$ such that
		\begin{equation}
			\tilde{\gamma}_{1}(\tilde{p}_{i}) \leq \bar{\gamma}(|\tilde{p}_{i}|_{\infty}). \label{eq:bar_gamma}
		\end{equation}
		We use \eqref{eq:lyap_iss_1}, \eqref{eq:lyap_iss_2} and \eqref{eq:bar_gamma} to obtain the following for all $|x|_{\infty}\leq K_{x}$ and $|u|_{\infty} \leq \Delta_{u}$
		\begin{equation}
			\frac{\partial{V}_{i}}{\partial \tilde{x}_{i}}  F_{i}(\tilde{x}_{i},x,\tilde{p}_{i},p^{\star},u) \leq -\lambda_{0} V_{i}(\tilde{x}_{i}) + \bar{\gamma}(|\tilde{p}_{i}|_{\infty}). \label{eq:V_i}
		\end{equation}
		By the comparison principle (Lemma 2.5 in \cite{khalil1996nonlinear2}), for any $\tilde{x}_i(0)\in \mathcal{H}(0,\Delta_{\tilde{x}})$, $x(0) \in \mathcal{H}(0,\Delta_{x})$, $u \in \mathcal{M}_{\Delta_u}$, the corresponding solution to \eqref{eq:general_plant}, \eqref{eq:observer_i} verifies
		\begin{equation}
			V_{i}(\tilde{x}_{i}(t)) \leq \exp(-\lambda_{0}t) V_{i}(\tilde{x}_{i}(0)) + \frac{1}{\lambda_{0}}\bar{\gamma}(|\tilde{p}_{i}|_{\infty}), \label{eq:vv}
		\end{equation}
		 for $t \geq 0$. We use \eqref{eq:lyap_iss_1} and the fact that for any $a$, $b\geq 0$, $\sqrt{a+b} \leq \sqrt{a} + \sqrt{b}$ to obtain \eqref{eq:xi_iss} as desired with $\bar{k}:=\sqrt{\frac{a_{2}}{a_1}}$, $\bar{\lambda}:=\frac{\lambda_0}{2}$ and $\bar{\gamma}_{\tilde{x}}(r):=\sqrt{\frac{1}{a_{1}\lambda_{0}}}\sqrt{\bar{\gamma}(r)}$ for $r\geq 0$. 
	\end{proof}

	\begin{lem} \label{lem:bound_mu}	
		Consider system \eqref{eq:general_plant}, the state error system \eqref{eq:error_sys_general} and the monitoring signal \eqref{eq:monitoring_signal} under Assumptions \ref{ass:stable_sys}-\ref{ass:PE_y_tilde}. For any $\Delta_{\tilde{x}}$, $\Delta_{x}$, $\Delta_{u},\epsilon>0$, there exist class $\mathcal{K}_{\infty}$ functions $\underline{\chi}$ and $\bar{\chi}$ independent of $\epsilon$, a constant $T=T(\Delta_{\tilde{x}}, \Delta_{x}, \Delta_{u},\epsilon)>0$ such that for all $p_i\in\Theta$, $i\in\{1,\dots,N\}$,  $(x(0),\tilde{x}_{i}(0))\in \mathcal{H}(0,{\Delta_{x}}) \times \mathcal{H}(0,{\Delta_{\tilde{x}}})$, for some $u \in \mathcal{M}_{\Delta_u}$ that satisfies Assumption \ref{ass:PE_y_tilde}, the monitoring signal $\mu_{i}$ in \eqref{eq:monitoring_signal} satisfies
		\begin{equation}
			\underline{\chi}(|\tilde{p}_{i}|_{\infty}) \leq \mu_{i}(t) \leq \bar{\chi}(|\tilde{p}_{i}|_{\infty}) + \epsilon , \qquad \forall t  \geq T. \label{eq:mu_bound_x0}
		\end{equation}  \hfill $\Box$
	\end{lem}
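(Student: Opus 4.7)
The plan is to handle the two inequalities in (\ref{eq:mu_bound_x0}) separately: the persistency of excitation condition (Assumption \ref{ass:PE_y_tilde}) yields the lower bound, while Lemma \ref{lem:bound_tilde_x} combined with continuity of $h$ yields the upper bound. For the lower bound, I observe that for $t \geq T_f$ the exponential kernel in (\ref{eq:monitoring_signal}) satisfies $\exp(-\lambda(t-s)) \geq \exp(-\lambda T_f)$ for every $s \in [t-T_f, t]$. Restricting the integration window in (\ref{eq:monitoring_signal}) to $[t-T_{f},t]$ and applying (\ref{eq:tilde_y_PE}) directly gives
\[
\mu_i(t) \,\geq\, \exp(-\lambda T_f)\int_{t-T_f}^{t}|\tilde{y}_i(s)|_\infty^2\,ds \,\geq\, \exp(-\lambda T_f)\,\alpha_{\tilde{y}}(|\tilde{p}_i|_\infty),
\]
so one selects $\underline{\chi}(\cdot):=\exp(-\lambda T_f)\,\alpha_{\tilde{y}}(\cdot)\in\mathcal{K}_\infty$, which is manifestly independent of $\epsilon$.

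For the upper bound, the first task is a pointwise estimate on $|\tilde{y}_i(s)|_\infty$. Since $H(0,x,0,p^\star)=0$ by (\ref{eq:error_sys_general}), $h$ is continuous, and $(x,p^\star)$ lies in the compact set $\mathcal{H}(0,K_x)\times\Theta$ via Assumption \ref{ass:stable_sys}, the supremum of $|H|$ over $(x,p^\star)$ is a continuous function of $(\tilde{x}_i,\tilde{p}_i)$ vanishing at the origin. Lemma \ref{lem:bound_tilde_x} confines $\tilde{x}_i$ to a compact ball for all $t\geq 0$, so Lemma 3.5 of \cite{khalil1996nonlinear2} furnishes a class $\mathcal{K}_\infty$ function $\alpha_H$ (after routine extension beyond the compact interval) such that $|\tilde{y}_i(s)|_\infty \leq \alpha_H(|\tilde{x}_i(s)|_\infty+|\tilde{p}_i|_\infty)$. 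Substituting (\ref{eq:xi_iss}) and using the standard $\mathcal{K}_\infty$ inequality $\alpha(a+b)\leq\alpha(2a)+\alpha(2b)$ (then squaring), I arrive at a transient-plus-steady-state decomposition
\[
|\tilde{y}_i(s)|_\infty^2 \,\leq\, \alpha_1\bigl(\bar{k}\exp(-\bar{\lambda}s)\Delta_{\tilde{x}}\bigr) \,+\, \alpha_2(|\tilde{p}_i|_\infty)
\]
for some $\alpha_1,\alpha_2\in\mathcal{K}_\infty$. Inserting this into (\ref{eq:monitoring_signal}), the steady-state term contributes at most $\alpha_2(|\tilde{p}_i|_\infty)/\lambda$, so $\bar{\chi}:=\alpha_2/\lambda\in\mathcal{K}_\infty$ is independent of $\epsilon$, as required.

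It remains to bound the transient contribution $I(t):=\int_0^t\exp(-\lambda(t-s))\,\alpha_1(\bar{k}\exp(-\bar{\lambda}s)\Delta_{\tilde{x}})\,ds$ by $\epsilon$ for large $t$. Splitting at $s=t/2$: on $[0,t/2]$ the kernel is at most $\exp(-\lambda t/2)$ and $\alpha_1(\cdot)\leq\alpha_1(\bar{k}\Delta_{\tilde{x}})$, while on $[t/2,t]$ the integrand is bounded by $\alpha_1(\bar{k}\exp(-\bar{\lambda}t/2)\Delta_{\tilde{x}})$ and $\int_{t/2}^t\exp(-\lambda(t-s))\,ds\leq 1/\lambda$. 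Both pieces depend only on $\Delta_{\tilde{x}}$ (not on the particular initial state in $\mathcal{H}(0,\Delta_{\tilde{x}})$) and tend to zero as $t\to\infty$, so choosing $T=T(\Delta_{\tilde{x}},\Delta_x,\Delta_u,\epsilon)\geq T_f$ sufficiently large enforces $I(t)\leq\epsilon$ for all $t\geq T$, completing the proof. The main technical obstacle is precisely this uniform decay of $I$, but it follows cleanly once the worst-case initial state has been absorbed into $\Delta_{\tilde{x}}$; a secondary care is the $\mathcal{K}_\infty$ extension of $\alpha_H$ beyond the a priori compact domain supplied by Lemma 3.5 of \cite{khalil1996nonlinear2}.
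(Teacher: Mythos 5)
Your proof is correct and follows essentially the same route as the paper's: the lower bound is obtained identically (restricting the convolution to $[t-T_f,t]$, bounding the kernel below by $\exp(-\lambda T_f)$ and invoking Assumption \ref{ass:PE_y_tilde}), and the upper bound uses the same transient-plus-steady-state split of the convolution driven by Lemma \ref{lem:bound_tilde_x}. The only substantive difference is technical: the paper bounds $|\tilde{y}_i|_\infty$ linearly via local Lipschitz constants of $h$ (yielding an explicit quadratic $\bar{\chi}$) and splits the integral at a fixed time $T_{\tilde{x}}$, whereas you use a class-$\mathcal{K}_\infty$ majorant from continuity and compactness and split at $t/2$ --- slightly less explicit, but requiring only continuity of $h$.
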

	\begin{proof}[Proof of Lemma \ref{lem:bound_mu}]
		Let $\Delta_{\tilde{x}}$, $\Delta_{x}$, $\Delta_{u}$, $\epsilon>0$ and $p_i\in\Theta$, $i\in\{1,\dots,N\}$.
		\begin{itemize}
			\item Since $\Theta$ is compact, there exists $K_{\tilde{p}}>0$ such that for all $p\in\Theta$
			\begin{equation}
				|p-p^{\star}|_{\infty}\leq K_{\tilde{p}}, \label{eq:par_bound}
			\end{equation}
which implies that $|\tilde{p_{i}}|\leq K_{\tilde{p}}$.
		\item Let $\bar{k}$, $\bar{\lambda}>0$ and the class $\mathcal{K}_{\infty}$ function $\bar{\gamma}_{\tilde{x}}$ be generated from Lemma \ref{lem:bound_tilde_x}. By Lemma \ref{lem:bound_tilde_x}  and \eqref{eq:par_bound}, the solutions to \eqref{eq:general_plant}, \eqref{eq:error_sys_general} satisfy the following for all $(\tilde{x}_{i}(0),x(0))\in \mathcal{H}(0,\Delta_{\tilde{x}}) \times \mathcal{H}(0,{\Delta_{x}})$, $u\in \mathcal{M}_{\Delta_u}$ and $t\geq 0$,
		\begin{eqnarray}
			|\tilde{x}_{i}(t)|_{\infty} & \leq & \bar{k}\exp(-\bar{\lambda}t) |\tilde{x}_{i}(0)|_{\infty} + \bar{\gamma}_{\tilde{x}}(|\tilde{p}_{i}|_{\infty})
			 		 \leq  \bar{k} \Delta_{\tilde{x}} + \bar{\gamma}_{\tilde{x}}(K_{\tilde{p}})=:K_{\tilde{x}}. \label{eq:tilde_x_bound}
		\end{eqnarray}

		\item Since $h$ is continuously differentiable, $h$ is locally Lipschitz. Therefore, there exist constants $l_{\tilde{x}}$, $l_{\tilde{p}}>0$ such that for $x\in \mathcal{M}_{K_{x}}$ and $\tilde{x}_{i} \in \mathcal{M}_{K_{\tilde{x}}}$ with $K_{x},K_{\tilde{x}}>0$, the following holds for all $t\geq 0$
			{
			\begin{equation}
				\begin{array}{lll}
				|H(\tilde{x}_{i}(t), x(t), \tilde{p}_{i}, p^{\star})|_{\infty} & = &|H(\tilde{x}_{i}(t), x(t), \tilde{p}_{i}, p^{\star})-H(0,x(t),0,p^{\star})|_{\infty} \\
				& = &|h(\tilde{x}_{i}(t) + x(t),\tilde{p}_{i}+p^{\star}) - h(x(t),p^{\star})|_{\infty} \\
                & \leq & |h(\tilde{x}_{i}(t) + x(t),\tilde{p}_{i}+p^{\star}) - h(x(t),p^{\star})| \\
				& \leq & l_{\tilde{x}}|\tilde{x}_{i}(t)| + l_{\tilde{p}}|\tilde{p}_{i}|\\
				& \leq & \sqrt{n_{x}}l_{\tilde{x}}|\tilde{x}_{i}(t)|_{\infty} + \sqrt{n_{p}}l_{\tilde{p}}|\tilde{p}_{i}|_{\infty}, \label{eq:H_bound}
				\end{array}
			\end{equation}}				
			where we have used that $H(0,x(t),0,p^{\star})=0$ in view of the definition of $H$ in \eqref{eq:error_sys_general}.

		\item Let $\lambda > 0$ come from \eqref{eq:monitoring_signal}, we choose $\epsilon_{\tilde{x}}$, $\epsilon_{\mu}>0$ sufficiently small such that
				\begin{equation}
					\epsilon_{\mu} + \frac{4 n_{x} l_{\tilde{x}}^{2} \epsilon_{\tilde{x}}^{2}}{\lambda} = \epsilon.  \label{eq:def_eps}
				\end{equation}

			 Let $T_{\tilde{x}} > 0$ be sufficiently large such that
		\begin{equation}
			 \bar{k} \exp(-\bar{\lambda} t) \Delta_{\tilde{x}} \leq \epsilon_{\tilde{x}}, \qquad \forall t\geq T_{\tilde{x}}. \label{eq:beta_eps}
		\end{equation}
		Let $T_{\mu} \geq T_{\tilde{x}}$ be sufficiently large such that
				\begin{equation}
					\frac{1}{\lambda}\exp(-\lambda (T_{\mu}-T_{\tilde{x}})) (\sqrt{n_{x}}l_{\tilde{x}}K_{\tilde{x}} + \sqrt{n_{p}}l_{\tilde{p}} K_{\tilde{p}})^{2} \leq \epsilon_{\mu}. \label{eq:mu_eps}
				\end{equation}
		\item Let $T_f>0$ and the class $\mathcal{K}_{\infty}$ function $\alpha_{\tilde{y}}$ be generated by Assumption \ref{ass:PE_y_tilde}  such that for all $\tilde{x}_{i}(0)\in \mathcal{H}(0,\Delta_{\tilde{x}})$, for some $u\in \mathcal{M}_{\Delta_u}$, for all  $x\in\mathcal{M}_{K_{x}}$, the solution to \eqref{eq:error_sys_general} satisfies
				\begin{equation}
					\int_{t-T_f}^{t} |H(\tilde{x}_{i}(s), x(s), \tilde{p}_{i}, p^{\star})|_{\infty}^{2} ds \geq \alpha_{\tilde{y}}(|\tilde{p}_{i}|_{\infty}), \qquad \forall t\geq T_f. \label{eq:PE_H}
				\end{equation}
		\item We define the following
			\begin{equation}
				T := \max\{T_{\mu}, T_{\tilde{x}}, T_f\}, \label{eq:def_T}
			\end{equation}
			\begin{equation}
				\underline{\chi}(r):=  \exp\left(-\lambda T_{f} \right) \alpha_{\tilde{y}}(r) \qquad \forall r\geq 0 \label{eq:under_chi}
			\end{equation}
			\begin{equation}
				\bar{\chi}(r):=  \frac{4 n_{x} l_{\tilde{x}}^{2}}{\lambda} \bar{\gamma}_{\tilde{x}}^{2}(r) +  \frac{2 n_{p} l_{\tilde{p}}^{2}}{\lambda} r^{2} \qquad \forall r \geq 0. \label{eq:bar_chi}
			\end{equation}
		\end{itemize}
		Note that the class $\mathcal{K}_{\infty}$ functions $\underline{\chi}$ and $\bar{\chi}$ depend only on $\Delta_{\tilde{x}}$, $\Delta_{x}$, $\Delta_{u}$ and not on $\epsilon$.
		Let $t\geq T$, where $T$ is defined in \eqref{eq:def_T}. By definition of the monitoring signals in \eqref{eq:monitoring_signal},
		\begin{equation}
			\mu_{i}(t) =  \int_{0}^{t} \exp(-\lambda (t-s)) |H(\tilde{x}_{i}(s), x(s), \tilde{p}_{i}, p^{\star})|_{\infty}^{2} ds.
		\end{equation}
		We first establish the desired lower bound on $\mu_{i}$
		\begin{eqnarray}
			\mu_{i}(t) & = &  \int_{0}^{t-T_{f}} \exp(-\lambda (t-s)) |H(\tilde{x}_{i}(s), x(s), \tilde{p}_{i}, p^{\star})|_{\infty}^{2} ds \nonumber \\
									&& +  \int_{t-T_{f}}^{t} \exp(-\lambda (t-s)) |H(\tilde{x}_{i}(s), x(s), \tilde{p}_{i}, p^{\star})|_{\infty}^{2} ds  \\
									& \geq &  \int_{t-T_{f}}^{t} \exp(-\lambda (t-s)) |H(\tilde{x}_{i}(s), x(s), \tilde{p}_{i}, p^{\star})|_{\infty}^{2} ds.
		\end{eqnarray}
		As $s \mapsto \exp(\lambda s)$ is strictly increasing,
		\begin{equation}
			\mu_{i}(t) \geq  \exp(-\lambda T_{f}) \int_{t-T_{f}}^{t} |H(\tilde{x}_{i}(s), x(s), \tilde{p}_{i}, p^{\star})|_{\infty}^{2} ds.
		\end{equation}
		From \eqref{eq:PE_H}, \eqref{eq:under_chi}, since $t\geq T_{f}$,
		\begin{equation}
			\mu_{i}(t) \geq  \exp(-\lambda T_{f}) \alpha_{\tilde{y}}(|\tilde{p}_{i}|_{\infty}) = \underline{\chi}(|\tilde{p}_{i}|_{\infty}). \label{eq:lower_mu_claim}
		\end{equation}	
		We now obtain the desired upper bound of $\mu_{i}$
		\begin{eqnarray}
			\mu_{i}(t) & = & \int_{0}^{T_{\tilde{x}}} \exp(-\lambda (t-s)) |H(\tilde{x}_{i}(s), x(s), \tilde{p}_{i}, p^{\star})|_{\infty}^{2} ds \nonumber \\
									&& +  \int_{T_{\tilde{x}}}^{t} \exp(-\lambda (t-s)) |H(\tilde{x}_{i}(s), x(s), \tilde{p}_{i}, p^{\star})|_{\infty}^{2} ds.
		\end{eqnarray}
		Using \eqref{eq:H_bound} and the fact that for any $a$, $b\geq 0$, $(a+b)^{2} \leq 2a^{2} + 2b^{2}$, we obtain
		\begin{equation}
        \begin{array}{lllllll}
				\mu_{i}(t) & \leq & \frac{1}{\lambda}\exp(-\lambda(t-T_{\tilde{x}})) \left( \sup_{s\in [0,T_{\tilde{x}}]} |H(\tilde{x}_{i}(s), x(s), \tilde{p}_{i}, p^{\star})|_{\infty}^{2} \right) \\
						&& + \int_{T_{\tilde{x}}}^{t} \exp(-\lambda (t-s)) \Big( \sqrt{n_{x}}l_{\tilde{x}}|\tilde{x}_{i}(s)|_{\infty} + \sqrt{n_{p}}l_{\tilde{p}}|\tilde{p}_{i}|_{\infty} \Big)^{2} ds\\
						& \leq & \frac{1}{\lambda} \exp(-\lambda(t-T_{\tilde{x}})) \left( \sup_{s\in [0,T_{\tilde{x}}]} |H(\tilde{x}_{i}(s), x(s), \tilde{p}_{i}, p^{\star})|_{\infty}^{2} \right) \\
						&& + \int_{T_{\tilde{x}}}^{t} \exp(-\lambda (t-s)) \Big( 2n_{x}l_{\tilde{x}}^{2}|\tilde{x}_{i}(s)|_{\infty}^{2} + 2n_{p}l_{\tilde{p}}^{2}|\tilde{p}_{i}|_{\infty}^{2} \Big)  ds. \label{eq:mu_upper_1}
		\end{array}
\end{equation}
		We also have from Lemma \ref{lem:bound_tilde_x} and \eqref{eq:beta_eps} that, as $t\geq T_{\tilde{x}}$,
		\begin{equation}
			|\tilde{x}_{i}(t)|_{\infty} \leq \epsilon_{\tilde{x}} + \gamma_{\tilde{x}}(|\tilde{p}_{i}|_{\infty}),
		\end{equation}
		which implies that, using the fact that for any $a$, $b>0$, $(a+b)^{2} \leq 2a^{2} + 2b^{2}$
		\begin{equation}
			|\tilde{x}_{i}(t)|_{\infty}^{2} \leq 2\epsilon_{\tilde{x}}^{2} + 2\gamma_{\tilde{x}}^{2}(|\tilde{p}_{i}|_{\infty}). \label{eq:bound_tilde_2}
		\end{equation}
		Also, by \eqref{eq:tilde_x_bound} and \eqref{eq:H_bound}, we obtain
		\begin{equation}
        \begin{array}{llll}
			\sup_{s\in [0,T_{\tilde{x}}]} |H(\tilde{x}_{i}(s), x(s), \tilde{p}_{i}, p^{\star})|_{\infty}^{2}  & \leq &  \sup_{s\in [0,T_{\tilde{x}}]} (\sqrt{n_{x}}l_{\tilde{x}} |\tilde{x}_{i}(s)|_{\infty} + \sqrt{n_{p}}l_{\tilde{p}}|\tilde{p}_{i}|_{\infty}) ^{2}\\
       & = & (\sqrt{n_{x}}l_{\tilde{x}} K_{\tilde{x}} + \sqrt{n}_{p}l_{\tilde{p}}K_{\tilde{p}})^{2}. \label{eq:sup_1}
\end{array}		
\end{equation}
		Hence, from \eqref{eq:mu_upper_1} and in view of \eqref{eq:mu_eps}, \eqref{eq:bound_tilde_2} and \eqref{eq:sup_1},
		\begin{eqnarray}
			\mu_{i}(t) & \leq & \epsilon_{\mu} + \int_{T_{\tilde{x}}}^{t} \exp(-\lambda (t-s)) \Big( 4n_{x}l_{\tilde{x}}^{2}\epsilon_{\tilde{x}}^{2} + 4 n_{x}l_{\tilde{x}}^{2} \gamma_{\tilde{x}}^{2}(|\tilde{p}_{i}|_{\infty}) + 2n_{p}l_{\tilde{p}}^{2}|\tilde{p}_{i}|_{\infty}^{2} \Big)  ds  \nonumber\\
					& = &  \epsilon_{\mu} + \left( \int_{T_{\tilde{x}}}^{t} \exp(-\lambda (t-s)) ds \right) \Big( 4n_{x}l_{\tilde{x}}^{2} \epsilon_{\tilde{x}}^{2} + 4 n_{x}l_{\tilde{x}}^{2} \gamma_{\tilde{x}}^{2}(|\tilde{p}_{i}|_{\infty}) + 2n_{p}l_{\tilde{p}}^{2}|\tilde{p}_{i}|_{\infty}^{2} \Big). \label{eq:mu_int_1}
		\end{eqnarray}
		As $\int_{T_{\tilde{x}}}^{t} \exp(-\lambda (t-s)) ds = \frac{1}{\lambda} \big( 1- \exp(-\lambda (t-T_{\tilde{x}})) \big) \leq \frac{1}{\lambda}$. Therefore, from \eqref{eq:mu_int_1},
		\begin{eqnarray}
			\mu_{i}(t) & \leq & \epsilon_{\mu} + \frac{4 n_{x}l_{\tilde{x}}^{2} \epsilon_{\tilde{x}}^{2}}{\lambda} +  \frac{4 n_{x}l_{\tilde{x}}^{2}}{\lambda} \gamma_{\tilde{x}}^{2}(|\tilde{p}_{i}|_{\infty}) + \frac{2 n_{p}l_{\tilde{p}}^{2}}{\lambda}|\tilde{p}_{i}|_{\infty}^{2}.
		\end{eqnarray}
		By the definition of $\epsilon$ and $\bar{\chi}$ in \eqref{eq:def_eps} and \eqref{eq:bar_chi} respectively
		\begin{equation}
			\mu_{i}(t) \leq \epsilon + \bar{\chi}(|\tilde{p}_{i}|_{\infty}). \label{eq:upper_mu_claim}
		\end{equation}
		Therefore, \eqref{eq:mu_bound_x0} holds in view of \eqref{eq:lower_mu_claim} and \eqref{eq:upper_mu_claim}.
	\end{proof}
	
$\,$\\
\begin{proof}[Proof of Theorem \ref{thm:main_static}]
Given $\Delta_{\tilde{x}}$, $\Delta_{x}$, $\Delta_u > 0$ and $\nu_{\tilde{p}}$, $\nu_{\tilde{x}}>0$, we construct the ingredients needed.
\begin{itemize}
	\item Let $\nu_{1}>0$ be sufficiently small such that
			\begin{equation}
				\nu_1 \leq \bar{\gamma}_{\tilde{x}}^{-1}(\nu_{\tilde{x}}), \label{eq:nu1_bound}
			\end{equation}
			where $\bar{\gamma}_{\tilde{x}}$ is the class $\mathcal{K}_{\infty}$ function generated by Lemma \ref{lem:bound_tilde_x}.
	\item Let the class $\mathcal{K}_{\infty}$ functions $\bar{\chi}$ and $\underline{\chi}$ be generated by Lemma \ref{lem:bound_mu}. Choose $\epsilon \in \left(0,\underline{\chi}(\min\{\nu_1,\nu_{\tilde{p}}\}) \right]$ and
			\begin{equation}
				d^{\star} := \bar{\chi}^{-1}\left(\underline{\chi}(\min\{\nu_1,\nu_{\tilde{p}}\})-\epsilon \right). \label{eq:dstar_def}
			\end{equation}
\end{itemize}
Recall that the chosen monitoring signal at any time $t\geq 0$ is denoted as $\mu_{\sigma(t)}(t)$. By the definition of the selection criterion \eqref{eq:selection_criterion},
\begin{equation}
	\mu_{\sigma(t)}(t) \leq \mu_{j}(t), \qquad \forall t\geq 0,\,\,\forall j\in\{1,\dots,N\}. \label{eq:mu_ineq}
\end{equation}
Since \eqref{eq:mu_ineq} holds, we consider the monitoring signal with the smallest parameter estimation error, i.e. ${j^{\star}}=\arg\underset{j\in\{1,\dots,N\}}{\min}\left|\tilde{p}_{j}\right|_{\infty}$ and $|\tilde{p}_{j^{\star}}|=d(p^{\star},\widehat{\Theta})$ where $d(p^{\star},\widehat{\Theta})$ is defined in (\ref{eq:pt_set_dist}). Using Lemma \ref{lem:bound_mu}, there exists $T > 0$ such that
\begin{equation}
		\underline{\chi}(|\tilde{p}_{\sigma(t)}(t)|_{\infty}) \leq \mu_{\sigma(t)}(t) \leq \mu_{j^{\star}}(t) \leq \bar{\chi}(|\tilde{p}_{j^{\star}}|_{\infty}) + \epsilon, \qquad \forall t \geq T.
\end{equation}
Therefore
\begin{equation}\label{eq:tildep_f-prel}
	\left| \tilde{p}_{\sigma(t)}(t) \right|_{\infty} \leq \underline{\chi}^{-1}\left( \bar{\chi}\left( d(p^{\star},\widehat{\Theta}) \right) + \epsilon \right), \qquad \forall t \geq T.
\end{equation}

\noindent Recall from Section \ref{sec:static_sampling} that we sample the parameter set $\Theta$ in a manner such that $d(p^{\star},\widehat\Theta)\to 0$ as $N\to\infty$. Therefore, there is an $N^{\star}\in\mathbb{N}_{\geq 1}$ such that $d(p^{\star},\widehat{\Theta})\leq d^{\star}$ for all $N \geq N^{\star}$. We fix $N^{\star}$ and take $N\geq N^{\star}$. Hence, for all $t\geq T$
\begin{equation}
	\begin{array}{lll}
	\left| \tilde{p}_{\sigma(t)}(t) \right|_{\infty} & \leq & \underline{\chi}^{-1}\left( \bar{\chi}\left( d^{\star} \right) + \epsilon \right)  =  \min\{\nu_1,\nu_{\tilde{p}} \} \leq \nu_{\tilde{p}}.
\end{array} \label{eq:tildep_f}
\end{equation}

\noindent We now examine the state estimation error $\tilde{x}_{\sigma(t)}(t)$. For all $t\geq 0$,
\begin{equation}
	\left| \tilde{x}_{\sigma(t)}(t)\right|_{\infty} \leq \underset{i\in\{1,\dots,N\}}{\max}\left| \tilde{x}_{i}(t) \right|_{\infty}. \label{eq:bound_tilde_x_1}
\end{equation}
By Lemma \ref{lem:bound_tilde_x}, we have that for $i\in\{1,\dots,N\}$, for all $\tilde{x}_{i}(0) \in \mathcal{H}(0,{\Delta_{\tilde{x}}})$, $x(0)\in\mathcal{H}(0,{\Delta_{{x}}})$ and $u\in\mathcal{M}_{\Delta_u}$, the solution to (\ref{eq:general_plant}), \eqref{eq:error_sys_general} satisfies
\begin{equation}
	\left| \tilde{x}_{i}(t) \right|_{\infty} \leq \bar{k} \exp(-\bar{\lambda} t) |\tilde{x}_{i}(0)|_{\infty} + \bar{\gamma}_{\tilde{x}}(|\tilde{p}_{i}|_{\infty}), \label{eq:bound_tilde_x_2}
\end{equation}
where $\bar{k}$, $\bar{\lambda}>0$ and $\bar{\gamma}_{\tilde{x}}\in\mathcal{K}_{\infty}$. Like in the proof of Lemma \ref{lem:bound_mu}, since  $p^{\star}$, ${p}_{i} \in \Theta$, there exists $K_{\tilde{p}}>0$, which is independent of $i$, such that $|\tilde{p}_{i}|_{\infty}\leq K_{\tilde{p}}$. Hence, from \eqref{eq:bound_tilde_x_1} and \eqref{eq:bound_tilde_x_2}, we obtain for all $t\geq 0$
\begin{equation}
	\begin{array}{lll}
	\left| \tilde{x}_{i}(t) \right|_{\infty} & \leq & \bar{k} \Delta_{\tilde{x}} + \bar{\gamma}_{\tilde{x}}\left(\underset{i\in\{1,\dots,N\} }{\max}|\tilde{p}_{i}|_{\infty} \right) \\
	& = & \bar{k} \Delta_{\tilde{x}} + \bar{\gamma}_{\tilde{x}}\left(K_{\tilde{p}} \right) =:\bar{K}_{\tilde{x}}.
	\end{array} \label{eq:tilde_x_bound_final}
\end{equation}
Hence, we have from \eqref{eq:bound_tilde_x_1} and \eqref{eq:tilde_x_bound_final} that the solution to the chosen state estimation error system satisfies
\begin{equation}
	\left| \tilde{x}_{\sigma(t)}(t)\right|_{\infty} \leq \bar{K}_{\tilde{x}}, \qquad \forall t \geq 0. \label{eq:bound_tildex_done}
\end{equation}

\noindent Furthermore, we have from \eqref{eq:tildep_f} that the parameter error $\tilde{p}_{\sigma}$ converges to the hypercube centered at $0$ and of edge length $2\nu_{\tilde{p}}$ in finite-time. Let $\mathcal{S}:=\{ i\in\{1,\dots,N\}\, :\, |\tilde{p}_{i}| \leq \min\{ \nu_{\tilde{p}}, \nu_1 \} \}$. Therefore, as $t\geq T$
\begin{equation}
	|\tilde{x}_{\sigma(t)}(t)|_{\infty} \leq \max_{i\in \mathcal{S}} |\tilde{x}_{i}(t)|_{\infty}.
\end{equation}
Consequently, we have from \eqref{eq:bound_tilde_x_2} that
\begin{eqnarray}
	\limsup_{t\rightarrow \infty} |\tilde{x}_{\sigma(t)}(t)|_{\infty} & \leq & \limsup_{t\rightarrow \infty} \max_{i\in \mathcal{S}} |\tilde{x}_{i}(t)|_{\infty} \leq \max_{i\in \mathcal{S}} \bar{\gamma}_{\tilde{x}}(|\tilde{p}_{i}|_{\infty}) \leq \bar{\gamma}_{\tilde{x}}(\min\{ \nu_{\tilde{p}},\nu_1\}) \leq \bar{\gamma}_{\tilde{x}}(\nu_1). \nonumber 
\end{eqnarray}
By \eqref{eq:nu1_bound}, we obtain
\begin{equation}
	\limsup_{t\rightarrow \infty} |\tilde{x}_{\sigma(t)}(t)|_{\infty} \leq \nu_{\tilde{x}}. \label{eq:sup_tilde_x_done}
\end{equation}
Finally, we have shown \eqref{eq:main_static_results} in view of \eqref{eq:tildep_f}, \eqref{eq:bound_tildex_done}  and \eqref{eq:sup_tilde_x_done}.
\end{proof}


\subsection{Proof of Theorem \ref{thm:main_dynamic}}

Let $\Delta_{\tilde{x}},\Delta_x,\Delta_u,\nu_{\tilde{x}},\nu_{\tilde{p}}>0$ and $\alpha \in (0,1)$. The solutions to \eqref{eq:general_plant} and \eqref{eq:observer_i_dynamic} are unique and are defined for all positive time, for any initial condition and any input $u$ in view of Sections \ref{sec:prob} and \ref{sec:multiobs}. On the other hand, $p_{i}(t)\in\Theta$ for any $i\in\{1,\ldots,N\}$ and $t\geq 0$ in view of Section \ref{sec:description-dynamic}. Hence, we will be able to invoke the results of Lemmas \ref{lem:bound_tilde_x}-\ref{lem:bound_mu} to derive the desired result.

Let $\nu_{1}>0$ be sufficiently small such that
\begin{equation}
\nu_1  \leq  \min\{\nu_{\tilde{p}},\bar{\gamma}_{\tilde{x}}^{-1}(\nu_{\tilde{x}})\},
\label{eq-proof-thm-dyn-nu-1}
\end{equation}
where $\bar{\gamma}_{\tilde{x}}$ is a class $\mathcal{K}_{\infty}$ function generated by Lemma \ref{lem:bound_tilde_x}. Let $\underline{\chi}$ and $\bar{\chi}$ be the class $\mathcal{K}_{\infty}$ functions generated by Lemma \ref{lem:bound_mu} (which are independent of the choice of the parameters $p_{i}$, $i\in\{1,\ldots,N\}$, see Lemma \ref{lem:bound_mu}). We introduce $\Delta_{\infty}\in(0,\Delta)$ which is sufficiently small such that
\begin{equation}
\begin{array}{rllll}
\underline{\chi}^{-1}\circ\bar{\chi}(\pi(\Delta_{\infty},0))  <  \nu_{1}  & \text{and} &
\Delta_{\infty} < \nu_{1},
\end{array}\label{eq-proof-thm-dyn-Delta-infty}
\end{equation}
where $\pi$ is the class $\mathcal{KL}$ function in (\ref{eq:dyn-sampling}).  It is always possible to ensure (\ref{eq-proof-thm-dyn-Delta-infty}) as $\underline{\chi},\,\bar{\chi}\in\mathcal{K}_{\infty}$ and $\pi\in\mathcal{KL}$. We select $T>0$ sufficiently large such that the conclusion of Lemma \ref{lem:bound_mu} holds with $\epsilon>0$ sufficiently small such that
\begin{equation}
\begin{array}{rllll}
\underline{\chi}^{-1}\circ\bar{\chi}(\pi(\Delta_{\infty},0)+\epsilon)  <  \nu_{1}  & \text{and} &
\underline{\chi}^{-1}(\epsilon) < \alpha \Delta_{\infty} < \nu_{1}.
\end{array}\label{eq-proof-thm-dyn-epsilon}
\end{equation}
We take $N^{\star}\in\mathbb{N}_{\geq 1}$ sufficiently large such that
\begin{equation}
\begin{array}{llllllll}
\underline{\chi}^{-1}\left(\bar{\chi}(\pi(s,N^{\star}))+\epsilon\right) & \leq & \alpha s & & \forall s\in[\Delta_{\infty},\Delta],
\end{array}\label{eq-proof-thm-dyn-N-star}
\end{equation}
which is always possible as $\underline{\chi},\,\bar{\chi}\in\mathcal{K}_{\infty}$, $\pi\in\mathcal{KL}$ and $\underline{\chi}^{-1}(\epsilon)<\alpha\Delta_{\infty}\leq \alpha s$ for $s\in [\Delta_{\infty},\Delta]$ according to (\ref{eq-proof-thm-dyn-epsilon}). A sufficient condition to ensure (\ref{eq-proof-thm-dyn-N-star}) is $\underline{\chi}\left(\bar{\chi}(\pi(\Delta,N^{\star}))+\epsilon\right) \leq \alpha\Delta_{\infty}$ for example.

Let $N\geq N^{\star}$, $T^{\star} = T$, $T_{d}\geq T^{\star}$, $(x(0),\tilde{x}_{i}(0)) \in \mathcal{H}(0,{\Delta_x}) \times \mathcal{H}(0,{\Delta_{\tilde{x}}})$ for $i\in\{1,\dots,N\}$ and $u \in \mathcal{M}_{\Delta_u}$ such that Assumption \ref{ass:PE_y_tilde} holds. We proceed by induction. Like in (\ref{eq:tildep_f-prel}), using Lemma \ref{lem:bound_mu}, we derive that, since $t_{1}=T_{d}\geq T$ in view of (\ref{eq:update_time_interval}) and since $p^{\star}\in\Theta(0)=\Theta$, 
\begin{equation}
\begin{array}{llllll}
|\tilde{p}_{\sigma(t_{1}^{-})}(t_{1}^{-})|_{\infty} & \leq & \underline{\chi}^{-1}\left(\bar{\chi}(d(p^{\star},\widehat{\Theta}(0))+\epsilon)\right)\\
& \leq & \underline{\chi}^{-1}\left(\bar{\chi}(\pi(\Delta(0),N)+\epsilon)\right).
\end{array}\label{eq-proof-thm-dyn-tilde-p-t1}
\end{equation}
We know that $\Delta_{\infty}<\Delta(0)=\Delta$, hence, in view of (\ref{eq-proof-thm-dyn-N-star})
\begin{equation}
\begin{array}{llllll}
|\tilde{p}_{\sigma(t_{1}^{-})}(t_{1}^{-})|_{\infty} & \leq & \alpha\Delta(0) & = & \Delta(1).
\end{array}\label{eq-proof-thm-dyn-t1}
\end{equation}
We deduce from (\ref{eq-proof-thm-dyn-t1}) that $p^{\star}\in\mathcal{H}(\hat{p}(t_{1}^{-}),\Delta(1))$. On the other hand, $p^{\star}\in\Theta(0)$ which implies that $p^{\star}\in \Theta(1)=\mathcal{H}(\hat{p}(t_{1}^{-}),\Delta(1))\cap\Theta(0)$.

Let $k\in\mathbb{N}_{\geq 1}$. Since $t_{k+1}-t_{k}=T_{d}\geq T$ in view of (\ref{eq:update_time_interval}), we obtain as in (\ref{eq-proof-thm-dyn-tilde-p-t1})
\begin{equation}
\begin{array}{llllll}
|\tilde{p}_{\sigma(t_{k}^{-})}(t_{k}^{-})|_{\infty} & \leq & \underline{\chi}^{-1}\left(\bar{\chi}(d(p^{\star},\widehat{\Theta}(k-1))+\epsilon)\right)\\
& \leq & \underline{\chi}^{-1}\left(\bar{\chi}(\pi(\Delta(k-1),N)+\epsilon)\right).
\end{array}
\end{equation}
If $\Delta(k-1)\leq \Delta_{\infty}$, then, in view of (\ref{eq-proof-thm-dyn-Delta-infty}),
\begin{equation}
\begin{array}{lllllllll}
|\tilde{p}_{\sigma(t_{k}^{-})}(t_{k}^{-})|_{\infty} & \leq & \Delta_{\infty} & \leq & \nu_{1} & \leq & \nu_{\tilde{p}}.
\end{array}
\end{equation}
If $\Delta(k-1)>\Delta_{\infty}$, then, in view of (\ref{eq-proof-thm-dyn-N-star})
\begin{equation}
\begin{array}{lllllllll}
|\tilde{p}_{\sigma(t_{k}^{-})}(t_{k}^{-})|_{\infty} & \leq & \alpha\Delta(k-1) = \Delta(k).
\end{array}
\end{equation}
In this case $p^{\star}\in\mathcal{H}(\hat{p}(t_{k}^{-}),\Delta(k))$ and we know that $p^{\star}\in \Theta(0)\cap\ldots\cap\Theta(k-1)$. As a consequence, $p^{\star}\in\Theta(k)$.

We have shown that for any $k\in\mathbb{N}_{\geq 1}$
\begin{equation}
\begin{array}{lllllllll}
|\tilde{p}_{\sigma(t_{k}^{-})}(t_{k}^{-})|_{\infty} & \leq & \left\{\begin{array}{lllllll}
\nu_{1} & \text{when} & \Delta(k-1)\leq \Delta_{\infty}\\
\Delta(k) & \text{when} & \Delta(k-1) > \Delta_{\infty}.
\end{array}\right.
\end{array}
\end{equation}
Since $k\to\infty$ in view of (\ref{eq:update_time_interval}) and $\Delta(k)=\alpha^{k}\Delta\to 0$ as $k\to\infty$, we deduce that there exists $\overline{T}>0$ such that
\begin{equation}
\begin{array}{llllllll}
|\tilde{p}_{\sigma(t)}(t)| & \leq & \nu_{1} & \leq & \nu_{\tilde{p}} & & & \forall t\geq \overline{T}.
\end{array}
\end{equation}
The proof is completed by following the same lines as in the proof of Theorem \ref{thm:main_static}. \hfill $\blacksquare$

\subsection{Proof of Proposition \ref{prop:lin_obs_error}} \label{app:lin_obs_error}
For any $p_i\in\Theta$, $i\in\{1,\dots,N\}$, we obtain the following state estimation error system
\begin{eqnarray}
	\dot{\tilde{x}}_{i} & = & \Big( A(p_i) + L(p_{i}) C(p_i) \Big) \hat{x}_{i} - \Big( A(p^{\star}) + L(p_{i}) C(p^{\star}) \Big) x + \Big( B(p_i) - B(p^{\star}) \Big) u \nonumber \\
	& = & \Big( A(p_i) + L(p_{i}) C(p_i) \Big) \tilde{x}_{i} + \Big( \tilde{A}(p_i,p^{\star}) + L(p_{i}) \tilde{C}(p_i,p^{\star}) \Big) x + \tilde{B}(p_i,p^{\star}) u, \label{eq:lin_obs_error}
\end{eqnarray}
where we denote $\tilde{A}(p_i,p^{\star}):=A(p_i)-A(p^{\star})$, $\tilde{B}(p_i,p^{\star}):=B(p_i)-B(p^{\star})$ and $\tilde{C}(p_i,p^{\star}):=C(p_i)-C(p^{\star})$.
Let $p_{i}\in\Theta$ and $V_{i}:\tilde{x}_{i}\mapsto\tilde{x}_{i}^{T}P_{i} \tilde{x}_{i}$, where $P_i$ is a real symmetric, positive definite matrix which satisfies
\begin{equation}
	P_i\big( A(p_i) + L(p_{i}) C(p_i)\big) + \big( A(p_i) + L(p_{i}) C(p_i)\big)^{T}P_i = -\nu_{i} \mathbb{I}, \textrm{with } \nu_{i}> 1. \label{eq:lyap_eq_lin}
\end{equation}
Such a matrix $P_{i}$ always exists according to Theorem 3.6 of \cite{khalil1996nonlinear2} since $A(p_i) + L(p_{i}) C(p_i)$ is Hurwitz. Hence, \eqref{eq:lyap_iss_1} is satisfied with  $a_1:= \min_{i\in\{1,\dots,N\}} \lambda_{\min}(P_{i})$ and $a_2:=n_x \max_{i\in\{1,\dots,N\}} \lambda_{\max}(P_{i})$. Let $\tilde{x}_{i},x\in\mathbb{R}^{n_{x}}$ and $u\in\mathbb{R}^{n_{u}}$, it holds that
{\setlength\arraycolsep{1pt}\begin{equation}
\begin{array}{llllll}
	\left\langle \nabla V_{i}(\tilde{x}_{i}),F_{i}(\tilde{x}_{i}, x, \tilde{p}_{i}, p^{\star}, u)  \right\rangle & = & \tilde{x}_{i}^{T} \Big( P_i\big( A(p_i) + L(p_{i}) C(p_i)\big) + \big( A(p_i) + L(p_{i}) C(p_i)\big)^{T}P_i \Big)  \tilde{x}_{i} \\
	&& \hspace{-0.7cm}+ 2 \tilde{x}_{i}^{T} P_i \Big( \big( \tilde{A}(\tilde{p}_{i}+p^{\star},p^{\star}) + L(p_{i}) \tilde{C}(\tilde{p}_{i}+p^{\star},p^{\star}) \big) x + \tilde{B}(\tilde{p}_{i}+p^{\star},p^{\star}) u \Big)  \\
	& \leq & - \nu_{i} |\tilde{x}_{i}|^{2} + 2 \left|\tilde{x}_{i}\right| |P_i| \gamma(\tilde{p}_{i},x,u), \label{eq:lin_V_i_1}
\end{array}
\end{equation}}
\hspace{-0.15cm}where $\gamma(\tilde{p}_{i},x,u):=\underset{p\in\Theta}{\max}\left|\Big( \big( \tilde{A}(\tilde{p}_{i}+p,p) +  L(p_{i}) \tilde{C}(\tilde{p}_{i}+p,p) \big) x +  \tilde{B}(\tilde{p}_{i}+p,p^{\star}) u \Big) \right|$. The function $\gamma$ is continuous since $A$, $B$ and $C$ are continuous in their argument and $\Theta$ is compact set. Moreover $\gamma(0,x,u)=0$ for any $x\in\mathbb{R}^{n_{x}}$ and $u\in\mathbb{R}^{n_{u}}$. Using the fact that $2ab\leq \frac{\nu_{i}}{2}a^{2} + \frac{2}{\nu_{i}}b^{2}$ for any $a,b\in\mathbb{R}$, we deduce from (\ref{eq:lin_V_i_1}) that, for any
$\tilde{x}_{i},x\in\mathbb{R}^{n_{x}}$ and $u\in\mathbb{R}^{n_{u}}$,
\begin{equation}
\begin{array}{llllll}
	\left\langle \nabla V_{i}(\tilde{x}_{i}),F_{i}(\tilde{x}_{i}, x, \tilde{p}_{i}, p^{\star}, u)  \right\rangle
	& \leq & - \frac{\nu_{i}}{2} |\tilde{x}_{i}|^{2} + \frac{2}{\nu_{i}}|P_i|^{2} \gamma(\tilde{p}_{i},x,u)^{2} \label{eq:lin_V_i_2}
\end{array}
\end{equation}
from which we derive that \eqref{eq:lyap_iss_2} is satisfied by using (\ref{eq:lyap_iss_1}) and by invoking the equivalence of the infinity and the Euclidean norms. \hfill $\Box$

\subsection{Proof of Proposition \ref{prop:cc_obs_error}} \label{app:cc_obs_error}
Let $p_{i}\in\Theta$ with $i\in\{1,\dots,N\}$, we obtain the following state estimation error system from \eqref{eq:non_lin_plant} and \eqref{eq:non_lin_obs}
\begin{eqnarray}
	\dot{\tilde{x}}_{i} & = & \Big(A(p_i) + L(p_{i}) C(p_i) \Big)\tilde{x}_{i} + G(p_i) \Big( \gamma(w_i) - \gamma(v) \Big) \nonumber \\
	&& + \tilde{G}(p_i,p^{\star}) \gamma(v) + \Big(\tilde{A}(p_i,p^{\star})+ L(p_{i}) \tilde{C}(p_i,p^{\star})\Big)x + \tilde{B}(p_i,p^{\star}) \phi(u,y), \label{eq:tilde_x_1}	 				
\end{eqnarray}
where $v:=Hx$, $w_{i}:=H\hat{x}_{i} + K(p_{i})(C(p_i)\hat{x}_{i}-y)$, $\tilde{A}(p_i,p^{\star}):=A(p_i)-A(p^{\star})$, $\tilde{B}(p_i,p^{\star}):=B(p_i)-B(p^{\star})$, $\tilde{G}(p_i,p^{\star}):=G(p_i)-G(p^{\star})$ and $\tilde{C}(p_i,p^{\star}):=C(p_i)-C(p^{\star})$.
In view of (\ref{ass:nonlinear_functions}) and according to the mean value theorem, there exists $\delta(t)=\mathrm{diag}(\delta_{1}(t),\dots,\delta_{n_{\gamma}}(t))$, where $\delta_{k}(t)$ take values in the interval $[a_{\gamma_k},b_{\gamma_k}]$ so that, for $\gamma = (\gamma_{1},\dots,\gamma_{n_\gamma})$,
\begin{equation}
	\gamma(w_i) - \gamma(v) = \delta(t) (w_i - v), \qquad \forall w_i, v\in \mathbb{R}^{n_\gamma},\,i\in\{1,\dots,N\}. \label{eq:gamma_delta}
\end{equation}
We define $V_{i}:\tilde{x}_{i}\mapsto\tilde{x}_{i}^{T}P_{i}\tilde{x}_{i}$, where $P_{i}$ is a real symmetric, positive definite matrix given by \eqref{eq:lmi_cc}. Note that $V_i$ satisfies inequality \eqref{eq:lyap_iss_1} of Assumption \ref{ass:obs_error_i} with $a_{1}=\lambda_{\min}(P_{i})$ and $a_{2}=n_{x}\lambda_{\max}(P_{i})$. By following the proof of Theorem 2 in \cite{chong2012robust} with the vector $\chi_{i}:=(\tilde{x}_{i},\delta(t)(H+K(p_{i})C)\tilde{x}_{i}, \bar{w})$, where $\bar{w}=\bar{w}(p_i,p^{\star},x,u) :=  \tilde{G}(p_i,p^{\star}) \gamma(Hx) + \big( \tilde{A}(p_i,p^{\star}) + L(p_{i}) \tilde{C}(p_i,p^{\star}) \big)x + \tilde{B}(p_i,p^{\star}) \phi(u,y)$, we obtain, for any $\tilde{x}_{i},x\in\mathbb{R}^{n_{x}}$, $u\in\mathbb{R}^{n_{u}}$,
\begin{eqnarray}
	\left\langle\nabla V_{i}(\tilde{x}_{i}),F_{i}(\tilde{x}_{i}, x, \tilde{p}_{i}, p^{\star}, u)\right\rangle & \leq & -\nu_{i}|\tilde{x}_{i}|^{2} + \mu_{i} |\bar{w}|^{2}. \label{eq:V_3}
\end{eqnarray}
We then use the same arguments as in the proof of Proposition \ref{prop:lin_obs_error} to derive (\ref{eq:lyap_iss_2}). \hfill $\Box$


\bibliographystyle{plain}
\bibliography{../multiobs_ref}

\end{document}